\newtheorem{theorem}{Theorem}[section]
\newtheorem{lemma}[theorem]{Lemma}
\newtheorem{corollary}[theorem]{Corollary}
\newtheorem{conjecture}[theorem]{Conjecture}
\newtheorem{proposition}[theorem]{Proposition}
\theoremstyle{definition} %
\newtheorem{definition}[theorem]{Definition}
\newtheorem{remark}[theorem]{Remark}
\newtheorem{example}[theorem]{Example}
\newtheorem{examples}[theorem]{Examples}
\newtheorem{question}[theorem]{Question}
\newcommand{\todo}[2][]{}
\newcommand{\texthl}[1]{}
\newcommand{\nicolas}[1]{}
\newcommand{\justine}[1]{}
\newcommand{\tentative}[1]{}
\newcommand{\profile}[1]{\varphi_{#1}}
\newcommand{\profileseries}[1]{\mathcal{H}_{#1}}
\newcommand{\QQ}{\mathbb{Q}}
\newcommand{\NN}{\mathbb{N}}
\newcommand{\ZZ}{\mathbb{Z}}
\newcommand{\KK}{\QQ}
\newcommand{\restrict}[2]{{#1}_{|#2}}
\newcommand{\reynolds}[2][]{R_{#1}^{#2}}
\newcommand{\age}[1]{\mathcal{A}(#1)}
\newcommand{\orbitalgebra}[2][\KK]{#1\age{#2}}
\newcommand{\wreath}{\wr}
\newcommand{\sg}{\mathfrak{S}}
\newcommand{\sym}{\operatorname{Sym}}
\newcommand{\Aut}{\operatorname{Aut}}
\newcommand{\suchthat}{\mid}
\newcommand{\fix}{\operatorname{Fix}}
\newcommand{\stab}{\operatorname{Stab}}
\newcommand{\fisubgroup}{K}
\title[]{The orbit algebra of an oligomorphic permutation group with
  polynomial profile is Cohen-Macaulay}
\author{Justine Falque \and Nicolas M. Thiéry}
\address{Univ Paris-Sud, Laboratoire de Recherche en Informatique,
Orsay; %
CNRS, Orsay, F-91405, France}
\abstract{Let $G$ be a group of permutations of a denumerable set
  $E$. The \textbf{profile} of $G$ is the function $\profile{G}$ which
  counts, for each $n$, the (possibly infinite) number $\profile{G}(n)$ of orbits of $G$
  acting on the $n$-subsets of $E$.
  Counting functions arising this way, and their associated generating
  series, form a rich yet apparently strongly constrained class. In
  particular, Cameron conjectured in the late
  seventies that, whenever $\profile{G}(n)$ is bounded by a
  polynomial, it is asymptotically equivalent to a polynomial. In
  1985, Macpherson further asked if the \textbf{orbit
    algebra} of $G$ -- a graded commutative algebra invented by
  Cameron and whose Hilbert function is $\profile{G}$ -- is finitely
  generated.

  In this paper we announce a proof of a stronger statement: the orbit
  algebra is Cohen-Macaulay. The generating series of the profile is a
  rational fraction whose numerator has positive coefficients and
  denominator admits a combinatorial description.

  The proof uses classical techniques from group actions, commutative algebra, and invariant theory; it steps towards
  a classification of ages of permutation groups with profile bounded
  by a polynomial.}
\keywords{Oligomorphic permutation groups, invariant theory,
  generating series}
\begin{document}

\maketitle
\ifdraft
\clearpage
\listoftodos
\clearpage
\fi

\section{Introduction}

Counting objects under a group action is a classical endeavor in
algebraic combinatorics. If $G$ is a permutation group acting on a
finite set $E$, Burnside's lemma provides a formula for the number of
orbits, while Pólya theory refines this formula to compute, for
example, the \textbf{profile} of $G$, that is the function which counts,
for each $n$, the number $\profile{G}(n)$ of orbits of $G$ acting on
subsets of size $n$ of $E$.

In the seventies, Cameron initiated the study of the profile when $G$
is instead a permutation group of an infinite set $E$. Of course the
question makes sense mostly if $\profile{G}(n)$ is finite for all $n$;
in that case, the group is called \textbf{oligomorphic}, and the
infinite sequence $\profile{G}=(\profile{G}(n))_n$
an \textbf{orbital profile}. This
setting includes, for example, counting integer partitions (with
optional length and height restrictions) or graphs up to an
isomorphism, and has become a whole research
subject~\cite{Cameron.1990,Cameron.OPG.2009}. One central topic is the
description of general properties of orbital profiles.
It was soon observed that the potential growth rates exhibited jumps.
For example, the profile either grows at least as fast as the
partition function, or is bounded by a polynomial~\cite[Theorem~1.2]{Macpherson.1985}.
In the latter case, it was conjectured to be asymptotically polynomial:
\begin{conjecture}[Cameron~\cite{Cameron.1990}]
  \label{conjecture.cameron}
  Let $G$ be an oligomorphic permutation group whose profile is
  bounded by a polynomial. Then $\profile{G}(n)\sim a n^k$ for some
  $a>0$ and $k\in \NN$.
\end{conjecture}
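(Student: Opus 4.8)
The plan is to translate the analytic statement about $\profile{G}$ into structural information about the orbit algebra $\orbitalgebra{G}$, whose Hilbert function is by construction $\profile{G}$. Since $\profileseries{G}(z)=\sum_n \profile{G}(n)\,z^n$ is then the Hilbert series $\hilbertseries{\orbitalgebra{G}}$, the conjecture amounts to controlling this series near its dominant singularity. The guiding principle is that, for a finitely generated graded commutative algebra, the Hilbert function is eventually quasi-polynomial of degree $\dim-1$; upgrading ``quasi-polynomial'' to the genuine equivalence $\profile{G}(n)\sim a\,n^k$ is the crux, and it is exactly here that a Cohen--Macaulay structure will pay off.

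First I would establish that $\orbitalgebra{G}$ is finitely generated, and in fact Cohen--Macaulay. I expect this to rest on a structure theorem for oligomorphic groups whose profile is bounded by a polynomial: up to passing to a finite-index subgroup and to a normal form, such a $G$ should decompose as a finite group $H$ permuting finitely many infinite ``blocks'', each block carried by either the full symmetric group or the trivial group. Under such a description the orbit algebra should be identifiable with a graded direct summand of the invariant ring $\KK[X]^{H}$ of a finite group $H$ acting on a polynomial ring in finitely many variables. Cohen--Macaulayness would then follow from the Hochster--Eagon theorem, together with the fact that a graded direct summand of a Cohen--Macaulay algebra is again Cohen--Macaulay.

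Granting finite generation and the Cohen--Macaulay property, let $d=\dim \orbitalgebra{G}$ and choose a homogeneous system of parameters $\theta_1,\dots,\theta_d$ of degrees $d_i=\deg\theta_i$. Cohen--Macaulayness makes $\orbitalgebra{G}$ a free module over the polynomial subalgebra $\KK[\theta_1,\dots,\theta_d]$, so
\[
  \hilbertseries{\orbitalgebra{G}}=\frac{N(z)}{\prod_{i=1}^{d}\bigl(1-z^{d_i}\bigr)},
\]
where $N(z)\in\NN[z]$ records the degrees of a homogeneous free basis; in particular $N$ is nonzero, so $N(1)\ge 1$. This already yields the rational-fraction form with positive numerator announced in the abstract.

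It remains to extract the asymptotics and, crucially, to rule out oscillation of the leading term. Here I would use Cameron's fact that $\orbitalgebra{G}$ carries a degree-one element $e$ whose multiplication map is injective (this is what forces $\profile{G}$ to be non-decreasing); being a non-zero-divisor, $e$ may be taken as the first parameter, $\theta_1=e$ with $d_1=1$. The denominator then contains the factor $(1-z)$, and every root of unity $\zeta\neq 1$ is a pole of order strictly less than $d$, since $(1-z)$ does not vanish at $\zeta$. Consequently $z=1$ is the unique pole of maximal order $d$, and standard singularity analysis gives
\[
  \profile{G}(n)\sim \frac{N(1)}{(d-1)!\,\prod_{i=1}^{d} d_i}\; n^{\,d-1},
\]
with the leading coefficient a genuine positive constant. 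Setting $a=N(1)\big/\bigl((d-1)!\prod_i d_i\bigr)>0$ and $k=d-1\in\NN$ completes the argument. The main obstacle is plainly the first step: proving finite generation (Macpherson's question) and Cohen--Macaulayness, which demands the structural classification of polynomial-profile groups. By contrast, once that algebraic structure is in hand, the passage to the asymptotic equivalence is a routine consequence of the degree-one non-zero-divisor together with the shape of the Hilbert series.
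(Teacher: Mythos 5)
Your overall route is exactly the paper's: Cameron's conjecture is obtained there as Corollary~\ref{corollary.hilbert_series} of the Main Theorem (Theorem~\ref{theorem.cohen-macaulay}), just as you propose --- finite generation plus the Cohen--Macaulay property give $\profileseries{G}=N(z)/\prod_{i}(1-z^{d_i})$ with $N\in\NN[z]$, and the asymptotics follow. Your second half is correct, and indeed more explicit than the extended abstract: Cameron's degree-one non-zero-divisor $e$ may be taken as $\theta_1$ (in a Cohen--Macaulay graded algebra a homogeneous non-zero-divisor extends to a homogeneous system of parameters), the factor $(1-z)$ then caps the pole order at every root of unity $\zeta\neq 1$ at $d-1$, and $N(1)\geq 1$ makes $z=1$ the unique pole of maximal order $d$, whence $\profile{G}(n)\sim a\,n^{d-1}$ with $a>0$.

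The genuine gap is in the first half, which you rightly identify as the crux but whose content, as you state it, is false. You posit that up to finite index $G$ decomposes into finitely many infinite blocks, each carrying the full symmetric group or the trivial group (the latter is impossible outright: an infinite block with trivial action would already contradict oligomorphy). No finite-index reduction can make this true: the groups $H\wreath\sg_\infty$ with $H$ finite nontrivial (Example~\ref{example.wreath_products}) have one infinite orbit of \emph{finite} blocks, a feature that persists in every finite-index subgroup. Handling them is precisely the hard case of the paper (Section~\ref{section.finite_blocks}), which classifies the ``towers'' attached to such a block system (Corollary~\ref{corollary.tower}) and shows that some finite-index subgroup has the age of $H\wreath\sg_\infty$ (Corollary~\ref{corollary.finite_blocks}); the resulting orbit algebra is \emph{free} on the $H$-orbits of nontrivial subsets, which is not of the form $\KK[X]^{H}$ --- contrast items (2) and (3) of Example~\ref{example.wreath_products}: your picture conflates $\sg_\infty\wreath H$ with $H\wreath\sg_\infty$. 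The general case further involves primitive components of type $\Aut(\QQ)$ and $\Aut(\QQ/\mathbb{Z})$ (Theorem~\ref{profile.one.classification}), a finite kernel, and control of synchronization between distinct orbits of blocks (Subsection~\ref{subsection.subdirect_and_synchro}). Finally, the Cohen--Macaulay transfer in the paper is not Hochster--Eagon plus a summand argument applied to $\KK[X]^{H}$: it is a relative Reynolds operator with respect to a normal finite-index subgroup $\fisubgroup$ whose orbit algebra is (almost) free, replaying Hilbert's and Stanley's proofs (Theorem~\ref{theorem.finite_index.finite_generation}). Your summand argument is close in spirit, but it can only be run once the correct finite-index subgroup --- hence the correct classification --- is in hand, and that is where your plan, taken literally, breaks down.
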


As a tool in this study, Cameron introduced early on the \textbf{orbit
  algebra} $\orbitalgebra{G}$ of $G$, a graded connected commutative
algebra whose Hilbert function coincides with $\profile{G}$.
Macpherson then asked the following

\begin{question}[Macpherson~\cite{Macpherson.1985} p.~286]
  Let $G$ be an oligomorphic permutation group whose profile is
  bounded by a polynomial. Is $\orbitalgebra{G}$ finitely generated?
\end{question}
The point is that, by standard commutative algebra, whenever
$\orbitalgebra{G}$ is finitely generated, its Hilbert function is
asymptotically polynomial, as conjectured by Cameron. It is in fact \emph{eventually
  a quasi polynomial}. Equivalently, the
generating series of the profile
$\profileseries{G}=\sum_{n\in \NN}\profile{G}(n)z^n$ is a rational
fraction of the form
\begin{displaymath}
  \profileseries{G} = \frac {P(z)}{\prod_{i\in I}(1-z^{d_i})}\,,
\end{displaymath}
where $P(z)$ is a polynomial in $\ZZ[z]$ and the $d_i$'s are the degrees of the
generators.

In this paper, we report on a proof of Cameron's conjecture by
answering positively to Macpherson's question, and even to a stronger
question: is $\orbitalgebra{G}$ Cohen-Macaulay?
\begin{theorem}[Main Theorem]
  \label{theorem.cohen-macaulay}
  Let $G$ be a permutation group whose profile is bounded by a
  polynomial. Then $\orbitalgebra{G}$ is Cohen-Macaulay over a free
  subalgebra with generators of degrees $(d_i)_{i\in I}$ prescribed by
  the \textbf{block structure} of $G$.
\end{theorem}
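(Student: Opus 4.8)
The plan is to prove that the orbit algebra $\orbitalgebra{G}$ is Cohen-Macaulay by exhibiting an explicit homogeneous system of parameters (a free polynomial subalgebra) over which $\orbitalgebra{G}$ is a finitely generated free module. The strategy is to reduce the general case to a well-understood structural situation via the \textbf{block structure} of $G$. Since the profile is bounded by a polynomial, the action of $G$ must be highly constrained: one should first establish that $E$ decomposes, in a canonical $G$-invariant way, into finitely many blocks (or block-orbits), and that on each block the group acts in one of a small number of tame ways. The key preliminary step is therefore a \emph{classification-flavored structure theorem}: polynomial profile forces $G$ to be built, up to finite index and finite modifications, from a finite collection of primitive pieces, each contributing a generator of prescribed degree $d_i$ to the parameter subalgebra.

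With the block structure in hand, the next step is to identify the candidate homogeneous system of parameters. For each block (or each synchronized family of blocks) of period $d_i$, one constructs a distinguished element of $\orbitalgebra{G}$ in degree $d_i$, typically the orbit-sum of a canonical $d_i$-subset spread across the relevant block. I would then verify that these elements $\theta_1,\dots,\theta_{|I|}$ are algebraically independent and that the quotient $\orbitalgebra{G}/(\theta_1,\dots,\theta_{|I|})$ is finite-dimensional over $\KK$; equivalently, that the $\theta_i$ form a homogeneous system of parameters, so that $\orbitalgebra{G}$ is a finitely generated module over the polynomial ring $\KK[\theta_1,\dots,\theta_{|I|}]$. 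This matches the announced denominator $\prod_{i\in I}(1-z^{d_i})$ in the generating series.

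The heart of the argument is then to upgrade \emph{finitely generated as a module} to \emph{free as a module}, which is exactly the Cohen-Macaulay property for a graded connected algebra over a polynomial subalgebra generated by a homogeneous system of parameters. Here I would exploit the combinatorial meaning of the orbit algebra: an element of $\orbitalgebra{G}$ is a $G$-invariant function on finite subsets, and multiplication by $\theta_i$ has a transparent combinatorial interpretation (extending a subset by a canonical piece). The plan is to show this multiplication map is \emph{injective} in the appropriate graded pieces — a torsion-freeness statement — by constructing an explicit combinatorial section or a basis of invariant-theoretic Reynolds-type averages $\reynolds[\fisubgroup]{}$ that are manifestly free over the $\theta_i$. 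For the tame primitive pieces identified in the structure theorem, freeness can be checked directly; the general case follows by a tensor/wreath-product decomposition of $\orbitalgebra{G}$ along the blocks, using that the Cohen-Macaulay property is stable under tensor products of graded algebras.

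The main obstacle I anticipate is the structure theorem itself: controlling \emph{all} oligomorphic groups of polynomial profile tightly enough to read off both the degrees $d_i$ and a workable block decomposition is delicate, since the profile being polynomially bounded is a global growth hypothesis that must be converted into rigid local combinatorial structure. In particular, handling the interaction between the finite-index subgroup where the block action is ``clean'' and the finite quotient permuting the blocks — and ensuring the resulting algebra remains free rather than merely finite over the parameters — is where the genuine difficulty lies. Proving torsion-freeness of the module structure, as opposed to the comparatively routine verification that a system of parameters exists, will require the full force of the combinatorial description of $\orbitalgebra{G}$ together with classical invariant theory (e.g.\ an analogue of the Hochster--Eagon argument that invariant rings of finite groups acting on Cohen-Macaulay rings stay Cohen-Macaulay).
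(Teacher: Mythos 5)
Your outline assembles the same ingredients as the paper (canonical block structure, classification of the primitive pieces, tensor decomposition along blocks, Reynolds averaging, Hochster--Eagon), but the logical organization has a genuine flaw at the decisive step. You claim that freeness in the general case ``follows by a tensor/wreath-product decomposition of $\orbitalgebra{G}$ along the blocks.'' This is false for $\orbitalgebra{G}$ itself: the finite quotient permuting blocks, and the possible synchronizations between orbits of blocks, mean that $\orbitalgebra{G}$ is \emph{not} a tensor product of algebras attached to the block orbits --- it is only an invariant subalgebra of such a tensor product. The object that does decompose as a tensor product of free algebras is $\orbitalgebra{\fisubgroup}$ for a suitably chosen normal subgroup $\fisubgroup$ of finite index in $G$ (one that fixes the kernel, stabilizes each infinite block and acts on it as a primitive group, acts on each orbit of finite blocks with the age of some $H\wreath\sg_\infty$, and exhibits no residual synchronization). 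The paper's pivotal device, which your plan never states as such, is the lifting theorem (Theorem~\ref{theorem.finite_index.finite_generation}): if $\fisubgroup$ is normal of finite index and $\orbitalgebra{\fisubgroup}$ is finitely generated (resp.\ Cohen-Macaulay), then so is $\orbitalgebra{G}$; it is proved by replaying Hilbert's finite-generation argument and the classical Cohen-Macaulay transfer with the \emph{relative} Reynolds operator $\reynolds[\fisubgroup]{G}$, an $\orbitalgebra{G}$-module projection of $\orbitalgebra{\fisubgroup}$ onto $\orbitalgebra{G}$. Your closing mention of Hochster--Eagon is exactly this mechanism, but in your text it appears as an afterthought appended to a different (and unworkable as stated) plan: proving torsion-freeness directly in $\orbitalgebra{G}$ via an explicit combinatorial section, which you never describe and which the paper never needs.

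The second gap is the structure theorem itself, which you correctly flag as the main obstacle but for which you offer no strategy; this is where most of the paper's work lies. Concretely: the existence and uniqueness of the canonical block system $B(G)$ (a lattice argument using polynomial growth to forbid infinite increasing chains of block systems); the analysis of synchronization via subdirect products together with the classification of groups of profile $1$; and, hardest, the case of one infinite orbit of finite blocks, where the paper introduces the tower $H_0\,H_1\,H_2\cdots$ and proves via a four-block conjugation argument (Lemma~\ref{lemma.four_blocks}) that the tower stabilizes as $H_0\,H\,H\,H\cdots$, so some finite-index subgroup has the same age as $H\wreath\sg_\infty$ and hence a free orbit algebra. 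Finally, a smaller but real inaccuracy: the degrees $(d_i)_{i\in I}$ are not ``one per block (or synchronized family of blocks).'' An orbit of $k$ infinite blocks contributes degrees $1,\dots,k$, and an orbit of finite blocks with stabilized block group $H$ contributes one degree per $H$-orbit of nontrivial subsets (with multiplicity), so your proposed system of parameters indexed by blocks does not even have the right cardinality.
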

\begin{corollary}
  \label{corollary.hilbert_series}
  The generating series of the profile is of the (irreducible) form
  \begin{displaymath}
    \profileseries{G} = \frac {P(z)}{\prod_{i\in I}(1-z^{d_i})}\,,
  \end{displaymath}
  for some polynomial $P$ in $\NN[z]$; therefore
  $\profile{G}\sim an^{|I|-1}$ for some $a>0$.
\end{corollary}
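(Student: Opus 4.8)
The plan is to read the generating series straight off the Cohen--Macaulay structure furnished by Theorem~\ref{theorem.cohen-macaulay}, and then to obtain the asymptotics by a short singularity analysis at $z=1$. First I would unpack what Cohen--Macaulayness means here: that $\orbitalgebra{G}$ is Cohen--Macaulay over a free subalgebra $B=\KK[\theta_1,\dots,\theta_r]$ with $\deg\theta_i=d_i$ (so $r=|I|$) says precisely that $\orbitalgebra{G}$ is a \emph{finitely generated free} $B$-module; in particular it is finitely generated as an algebra, settling Macpherson's question. Choosing a homogeneous $B$-basis $(\eta_j)_{j\in J}$ with $\deg\eta_j=e_j$, the Hilbert series of the free module $\orbitalgebra{G}=\bigoplus_{j\in J}\eta_j\,B$ is the rank polynomial times the Hilbert series of $B$:
\begin{displaymath}
  \profileseries{G}=\hilbertseries{\orbitalgebra{G}}
  =\Bigl(\sum_{j\in J}z^{e_j}\Bigr)\prod_{i\in I}\frac{1}{1-z^{d_i}}
  =\frac{P(z)}{\prod_{i\in I}(1-z^{d_i})},
  \qquad P(z)=\sum_{j\in J}z^{e_j}.
\end{displaymath}
Since the $e_j$ are non-negative integers, $P\in\NN[z]$; and as $\orbitalgebra{G}$ is connected, the unit is the only basis element in degree $0$, so $P(1)=|J|>0$. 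This positivity is exactly what makes the fraction irreducible at $z=1$: no common factor $(1-z)$ survives, so the pole of $\profileseries{G}$ at $z=1$ has order exactly $|I|$.

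Next I would estimate $\profile{G}(n)=[z^n]\profileseries{G}$. Every pole of $\profileseries{G}$ sits at a root of unity, and has order at most $|I|$ since the denominator has only $|I|$ factors; order exactly $|I|$ is reached at $z=1$. Using $1-z^{d_i}=(1-z)(1+\dots+z^{d_i-1})$ one gets $\prod_{i}(1-z^{d_i})\sim(\prod_i d_i)(1-z)^{|I|}$ near $z=1$, so the contribution of the pole at $z=1$ is
\begin{displaymath}
  \frac{P(1)}{\prod_{i\in I}d_i}\,[z^n]\frac{1}{(1-z)^{|I|}}
  =\frac{P(1)}{\prod_{i\in I}d_i}\binom{n+|I|-1}{|I|-1}
  \sim\frac{P(1)}{(|I|-1)!\,\prod_{i\in I}d_i}\;n^{|I|-1}.
\end{displaymath}

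The one genuine obstacle is that this is not yet an equivalent: another root of unity could be a pole of order $|I|$ as well --- this occurs exactly when the $d_i$ share a common factor, e.g.\ $1/(1-z^2)^2$ has coefficients $n/2+1$ on even $n$ and $0$ on odd $n$ --- in which case the top-order behaviour of $\profile{G}$ would merely be an oscillating quasi-polynomial of degree $|I|-1$. To rule this out I would use the classical fact that an orbital profile is non-decreasing. Writing $\profile{G}$ eventually as a quasi-polynomial and denoting by $c_s$ the degree-$(|I|-1)$ coefficient on the residue class $n\equiv s$ modulo the period, comparing leading terms of $\profile{G}(n)\le\profile{G}(n+1)$ across adjacent classes forces $c_s\le c_{s+1}$ for all $s$; since the $c_s$ are periodic, running once around the cycle yields $c_0=c_1=\cdots$. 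Thus the oscillation cancels at leading order, the common value being the contribution of $z=1$ computed above, and $\profile{G}(n)\sim a\,n^{|I|-1}$ with $a=P(1)/\bigl((|I|-1)!\prod_{i}d_i\bigr)>0$.
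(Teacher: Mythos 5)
Your proposal is correct, and it is worth separating its two halves. The first half --- unpacking ``Cohen--Macaulay over a free subalgebra'' as saying that $\orbitalgebra{G}$ is a finitely generated free module over $\KK[\theta_1,\dots,\theta_r]$, choosing a homogeneous basis, and reading off $\profileseries{G}=P(z)/\prod_{i\in I}(1-z^{d_i})$ with $P\in\NN[z]$ --- is exactly the ``standard commutative algebra'' route the paper intends; the paper (an extended abstract) states the corollary with no proof at all, merely asserting in the introduction that finite generation makes the Hilbert function eventually quasi-polynomial. Where you genuinely go beyond the paper is the asymptotic claim: the paper's ``therefore'' silently skips the difficulty you isolate, namely that positivity of $P$ together with the denominator $\prod_i(1-z^{d_i})$ only yields a quasi-polynomial whose top coefficient may oscillate with $n$ (already $1/(1-z^2)$, the Hilbert series of a perfectly good Cohen--Macaulay algebra, has coefficients that are not $\sim a n^0$), so some input special to orbit algebras is unavoidable. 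Your repair --- invoking the classical fact that orbital profiles are non-decreasing, writing the profile as an eventual quasi-polynomial with leading coefficients $c_s$ per residue class, and running the inequalities $c_s\le c_{s+1}$ once around the period to force them all equal to the $z=1$ contribution $P(1)/\bigl((|I|-1)!\prod_i d_i\bigr)>0$ --- is complete and correct; this monotonicity is a theorem of Cameron (equivalently, the sum of the degree-one orbits is a non-zero-divisor, so $(1-z)\profileseries{G}$ has non-negative coefficients), and you should cite it rather than leave it as folklore, since it is the one load-bearing external ingredient. Two minor caveats: your argument establishes only that no factor $1-z$ cancels (pole of order exactly $|I|$ at $z=1$), not the full ``irreducible'' parenthetical of the statement, which would require ruling out cancellation at the other cyclotomic factors as well; and neither you nor the paper needs that stronger claim for the asymptotics, which your argument fully establishes.
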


Investigating the Cohen-Macaulay property was inspired by the
important special case of invariant rings of finite permutation
groups. At this stage, we presume that this theorem can be obtained as
a consequence of some classification result for ages of permutation
groups with profile bounded by a polynomial. In addition, the orbit
algebra would always be isomorphic to the invariant ring of some
finite permutation group.

This research is part of a larger program initiated in the seventies:
the study of the \emph{profile of relational
  structures}~\cite{Fraisse.TR.2000,Pouzet.2006.SurveyProfile} and in
general of the behavior of counting functions for hereditary classes of
finite structures, like undirected graphs, posets, tournaments,
ordered graphs, or permutations; see~\cite{Klazar.2008.Overview,Bollobas.1998.HereditaryPropertiesOfGraphs} for surveys.
For example, the analogue of Cameron's conjecture is proved
in~\cite{Balogh_Bollobas_Saks_Sos.2009} for undirected graphs and
in~\cite{Kaiser_Klazar.2003} for permutations.

In~\cite{Pouzet.2008.IntegralDomain}, the orbit algebra is proved to
be integral (under a natural restriction).
Cameron extends
in~\cite{cameron.1997} the definition of the orbit algebra to the
general context of relational structures. The analogue of
Theorem~\ref{theorem.cohen-macaulay} holds when the profile is
bounded (see~\cite[Theorem~26]{Pouzet.2006.SurveyProfile} and
\cite[Theorem~1.5]{Pouzet_Thiery.AgeAlgebra1}); it can fail as soon as
the profile grows faster.

In a context which is roughly the generalization of transitive groups
with a finite number of infinite blocks, the analogue of Cameron's
conjecture holds~\cite[Theorem~1.7]{Pouzet_Thiery.AgeAlgebra1} and the
finite generation admits a combinatorial
characterization~\cite{Pouzet_Thiery.AgeAlgebra2}.

This paper is structured as follows. In
Section~\ref{section.preliminaries}, we review the basic definitions
of orbit algebras and provide classical examples and operations. The
central notion is that of block systems; we explain that each block system
provides a lower bound on the growth of the profile and state the
existence of a \emph{canonical block system} $B(G)$ meant to maximize this
lower bound. This block system splits into orbits of blocks; each such
orbit consists either of infinitely many finite blocks or finitely
many infinite blocks; there can be in addition some finite orbits of finite
blocks; they form the \emph{kernel} and are mostly harmless. The
approach in the sequel is to treat each orbit of blocks separately, and then
recombine the results.

In Section~\ref{section.subgroups_of_finite_index}, we show that the
finite generation property can be lifted from normal subgroups of
finite index. This proves our Main Theorem when $B(G)$ consists of a
single orbit of infinite blocks. This further enables to assume
without loss of generality that the kernel is empty. In
Section~\ref{section.finite_blocks}, we classify, up to taking
subgroups of finite index, ages when $B(G)$ consists of a single
infinite orbit of finite blocks. Finally, in
Section~\ref{section.decoupage_recollage}, we combine the previous
results to prove our Main Theorem in the general case.

Full proofs and additional examples and figures will be published in a
long version of this extended abstract.

We would like to thank Maurice Pouzet for suggesting to work on this
conjecture, and Peter Cameron and Maurice Pouzet for enlightening
discussions.

\section{Preliminaries}
\label{section.preliminaries}

\subsection{The age, profile and orbit algebra of a permutation group}
\label{subsection.profile.age}

Let $G$ be a permutation group, that is a group of permutations of some
set $E$. Unless stated otherwise, $E$ is denumerable and $G$ is infinite.
The action of $G$ on the elements of $E$ induces an action on finite
subsets. The \textbf{age} of $G$ is the set $\age{G}$ of its
orbits of finite subsets. Within an orbit, all subsets share the
same cardinality, which is called the \textbf{degree} of the orbit. This
gives a grading of the age according to the degree of the orbits:
$\age{G} = \sqcup_{n\in \NN} \age{G}_n$. The \textbf{profile} of $G$ is
the function $\profile{G}: n\mapsto |\age{G}_n|$. In general, the
profile may take infinite values; the group is called
\textbf{oligomorphic} if it does not. 

We call the growth rate of a profile bounded by a polynomial the smallest number
$r$ satisfying $\phi (n) = O(n^r)$
(for instance, the growth rate of $n^2 + n$ is $2$). %
By extension, we speak of the \textbf{growth rate of a permutation group}
(which is that of its profile).

\begin{definition}
  We say that a permutation group is \textbf{$P$-oligomorphic} if its
  profile is bounded by a polynomial.
\end{definition}

\begin{examples}
  Let $G$ be the infinite symmetric group $\mathfrak{S}_{\infty}$. For
  each $n$ there is a single orbit containing all subsets of size $n$,
  hence $\profile{G}(n)=1$. We say that $G$ \emph{has profile $1$}.

  Now take $E=E_1 \sqcup E_2$, where $E_1$ and $E_2$ are two copies of
  $\NN$. Let $G$ be the group acting on $E$ by permuting the elements
  independently within $E_1$ and $E_2$ and by exchanging $E_1$ and
  $E_2$: $G$ is the \textbf{wreath product} $\sg_\infty \wreath \sg_2$. In that
  case, the orbits of subsets of cardinality $n$ are in bijection with
  integer partitions of $n$ with at most two parts.
\end{examples}

The \textbf{orbit algebra} of $G$, as defined by Cameron, is the 
graded connected vector
space $\orbitalgebra{G}$ of formal finite linear combinations of
elements of $\age{G}$, endowed with a commutative product as
follows: embed $\orbitalgebra{G}$ in the vector space of formal linear
combinations of finite subsets of $E$ by mapping each orbit to the sum of
its elements; endow the latter with the disjoint product that maps two
finite subsets to their union if they are disjoint and to $0$
otherwise. Some care is to be taken to check that all is
well defined.

\subsection{Block systems and primitive groups}

A key notion when studying permutation groups is that of \textbf{block
  systems}; they are the discrete analogues of quotient modules in
representation theory. A \textbf{block system} is a partition of $E$
into parts, called \textbf{blocks}, such that each $g\in G$ maps blocks
onto blocks. The partitions $\{E\}$ and $\{\{e\} \suchthat e\in E\}$
are always block systems and are therefore called the trivial block systems.
A permutation group is \textbf{primitive} if it admits no non trivial
block system. By extension, an orbit of elements is \textbf{primitive} if
the restriction of the group to this orbit is primitive.
A block system is \textbf{transitive} if $G$ acts transitively on its
blocks; in this case, all the blocks are conjugated and thus share the
same cardinality.

The collection of block systems of a permutation group forms a lattice
with respect to the refinement order, with the two trivial block
systems as top and bottom respectively.

The following two theorems will be central in
our study.
\begin{theorem}[Macpherson~\cite{Macpherson.1985}]
  \label{primitive.hightly.homogeneous}
  The profile of an oligomorphic primitive permutation group is either
  $1$ or grows at least as fast as the partition function.
\end{theorem}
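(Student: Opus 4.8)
The plan is to prove the dichotomy for a primitive oligomorphic group $G$ by showing that if the profile is not identically $1$, then $G$ must already contain enough ``independence'' among elements to force growth at least as fast as the partition function $p(n)$. First I would use the hypothesis $\profile{G}\neq 1$ to locate a degree $n_0$ and two distinct orbits of $n_0$-subsets; concretely, there exist two finite subsets $A$ and $B$ of the same cardinality that are not in the same orbit. The primitivity of $G$ is the engine that upgrades this single instance of nonhomogeneity into an unbounded supply: I would invoke the classical fact that a primitive group is transitive, and that the orbit relation refines no nontrivial block system, so the failure of homogeneity cannot be localized.

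The core combinatorial step is to extract an infinite sequence of ``free'' building blocks that can be combined independently, so that the number of orbits of $n$-subsets dominates the number of multisets (or partitions) assembled from these blocks. Concretely, I would look for an infinite family of pairwise disjoint finite configurations $C_1, C_2, \dots$ in $E$ such that the orbit of a union $C_{i_1}\sqcup\cdots\sqcup C_{i_k}$ is determined precisely by the multiset $\{i_1,\dots,i_k\}$. If this can be arranged with each $C_i$ of bounded size, then the orbits of subsets of a given degree inject a set of integer partitions, and the profile grows at least like the partition function. The mechanism producing such independent configurations is exactly where primitivity (as opposed to mere transitivity) is essential: in a primitive group one cannot have a small invariant equivalence that would glue the configurations together, so distinct assemblies stay in distinct orbits.

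The hardest part will be establishing this independence rigorously, i.e.\ ruling out the ``collapse'' in which rearranging or relabeling the pieces $C_i$ accidentally produces a group element carrying one assembly to another. This is where I expect the argument to rely on a careful Ramsey-type or compactness argument: one must show that, failing such independence, the obstructions would propagate to yield a nontrivial block system or a definable equivalence relation, contradicting primitivity. A clean way to organize this is to pass to the canonical structure preserved by $G$ and argue that a primitive oligomorphic group is either highly homogeneous (giving profile $1$ by the classification of highly homogeneous groups, which limits the possibilities severely) or fails high homogeneity and therefore admits the independent configurations above.

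Thus the proof splits along the line between high homogeneity and its failure: in the homogeneous case the profile is forced to be $1$, and in the non-homogeneous case primitivity supplies arbitrarily many independent pieces whose partition-indexed assemblies inject into $\age{G}$, yielding growth at least as fast as the partition function. I would expect the technical heart to be the classification or structural input needed to make the ``high homogeneity or independence'' alternative exhaustive, since this is precisely the point at which the infinite, oligomorphic, and primitive hypotheses must all be used simultaneously.
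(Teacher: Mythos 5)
The paper contains no proof of this statement: it is quoted as Macpherson's theorem, with the proof living entirely in \cite{Macpherson.1985}. So there is no internal argument to compare against, and your proposal has to stand on its own as a reconstruction of Macpherson's result --- which, as written, it does not.

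The genuine gap is in your central mechanism. You propose that if the independent configurations $C_1, C_2, \dots$ fail to exist --- i.e.\ if distinct assemblies collapse into the same orbit --- then the obstruction propagates to a $G$-invariant equivalence relation, hence a nontrivial block system, contradicting primitivity. But primitivity only forbids invariant equivalence relations on the \emph{points} of $E$; it says nothing about invariant equivalence relations on $k$-subsets or finite configurations. The automorphism group of the random graph is primitive and oligomorphic, yet its $2$-subsets carry the invariant partition into edges and non-edges; invariant structure on configurations is ubiquitous and perfectly compatible with primitivity, so ``collapse implies a block system'' fails, and with it the engine of your proof. This is exactly why Macpherson's theorem is hard: his argument needs Ramsey-type analysis of orbits on ordered structures to manufacture the many orbits, not merely the absence of blocks. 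Two further points. First, the ``homogeneous'' branch of your dichotomy needs no classification at all: profile identically $1$ \emph{is} by definition high set-homogeneity, so Cameron's five-group classification (Theorem~\ref{profile.one.classification}) plays no role in that direction. Second, your encoding with pieces $C_i$ of \emph{bounded} size cannot inject the partitions of $n$: the model for partition-function growth is $\sg_\infty \wr \sg_\infty$, where an orbit of $n$-subsets is recorded by the multiset of intersection sizes with blocks, i.e.\ by a partition of $n$; reproducing this requires pieces of unbounded size, which makes the independence you need even harder to extract from primitivity alone.
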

In our study of Macpherson's question, all groups have profile
bounded by a polynomial, and therefore primitive groups always have
profile $1$.

\begin{theorem}[Cameron~\cite{Cameron.1990}]
  \label{profile.one.classification}
  There are only five complete permutation groups with profile $1$:
  \begin{compactenum}
  \item The automorphism group $\Aut(\QQ)$ of the rational chain
    (order-preserving bijections on $\QQ$);
  \item $\mathrm{Rev}(\QQ)$ (order-preserving or reversing bijections
    on $\QQ$)
  \item $\Aut(\QQ / \mathbb{Z})$, preserving the cyclic order (see $\QQ / \mathbb{Z}$ as a circle);
  \item $\mathrm{Rev}(\QQ / \mathbb{Z})$, generated by $\mathrm{Cyc}(\QQ / \mathbb{Z})$ and a reflection;
  \item $\mathfrak{S}_{\infty}$.
  \end{compactenum}
\end{theorem}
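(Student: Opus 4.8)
The plan is to first convert the profile-$1$ hypothesis into a transitivity statement and then classify the invariant structures that can survive. Saying $\profile{G}(n)=1$ for every $n$ means exactly that $G$ acts transitively on the $n$-subsets of $E$ for all $n$, i.e. $G$ is $k$-set-transitive (highly homogeneous) for every $k$. Since passing to the topological closure of $G$ (in the product topology on $E^E$) changes neither the orbits on finite tuples nor those on finite subsets, it leaves the profile untouched; so I would reduce immediately to the case where $G$ is closed, which is what ``complete'' means here. It then suffices to classify the closed highly homogeneous groups.

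The invariant I would track is, for each $k$, the permutation group $H_k \le \sg_k$ induced on a $k$-subset $A$ by its setwise stabiliser $\stab_G(A)$. By $k$-homogeneity this is well defined up to conjugacy, and the number of orbits of $G$ on ordered $k$-tuples of distinct points equals $[\sg_k : H_k]$. These groups form a coherent tower: deleting a point shows that a point-stabiliser inside $H_{k+1}$ projects into $H_k$, so the sequence $(H_k)_k$ is severely constrained by restriction. The heart of the argument is the purely combinatorial classification of the self-consistent towers that can arise when $G$ is homogeneous at \emph{every} level: I would show $(H_k)_k$ must be one of the five families $\{1\}$, the order-two groups $C_2$, the cyclic groups $C_k$, the dihedral groups $D_k$, or the full symmetric groups $\sg_k$.

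Each surviving family corresponds to exactly one invariant relational reduct, and I would then pin down the group. The tower $(\sg_k)_k$ means $G$ is $k$-transitive for all $k$, so its closure is $\sg_\infty$. In each of the other four cases $G$ preserves precisely one nontrivial relation: a linear order (tower $\{1\}$), a betweenness relation ($C_2$), a cyclic order ($C_k$), or a separation relation ($D_k$); the smallest arity at which $G$ fails to be transitive identifies which. Two-set-transitivity forces any such order to be dense and without endpoints, and a Cantor back-and-forth identifies the underlying countable structure with $(\QQ,<)$ in the first two cases and with the rational points of a circle, $\QQ/\ZZ$, in the last two. Closedness together with full homogeneity then identifies $G$ with the full automorphism group of that structure: $\Aut(\QQ)$ and $\Aut(\QQ/\ZZ)$ in the orientation-preserving cases, and the automorphism-and-reversal groups $\mathrm{Rev}(\QQ)$ and $\mathrm{Rev}(\QQ/\ZZ)$ for betweenness and separation. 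This yields exactly the five listed groups.

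I expect the main obstacle to be the combinatorial classification of the admissible towers $(H_k)_k$, that is, ruling out any exotic invariant relation beyond orders and cyclic orders. This is precisely where the global hypothesis matters: because $G$ is homogeneous at \emph{all} levels simultaneously rather than up to some bound, one can propagate the constraint from small $k$ upward and show that no new invariant relation can emerge at any later stage. By contrast, the reduction to closed groups and the density/back-and-forth identification of the underlying structure are comparatively routine.
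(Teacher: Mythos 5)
First, a point of comparison: the paper does not prove this statement at all. It is Cameron's classification of closed highly homogeneous groups, imported from \cite{Cameron.1990} and used as a black box (together with Lemma~\ref{lemma.completion_same_profile_and_orbits} to blur the distinction between a group and its completion). So your proposal can only be measured against the known proof in the literature, which your outline does broadly follow: profile $1$ means transitivity on $n$-subsets for all $n$; closure changes no orbits on finite tuples or subsets; the induced groups $H_k\leq\sg_k$ on $k$-subsets (equivalently, the invariant relations: linear order, betweenness, circular order, separation) distinguish the five groups; and density plus back-and-forth plus closedness pins each group down. All of that is sound.

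The genuine gap is at what you yourself identify as the heart. The classification of the towers $(H_k)_k$ is \emph{not} ``purely combinatorial'' in the sense you describe, and propagating the restriction constraint (point stabilizers of $H_{k+1}$ restrict into $H_k$) can never yield only the five families. Concretely, the tower of alternating groups $H_k=A_k$ satisfies every constraint you state: the point stabilizer of $A_{k+1}$ restricts to $A_k\subseteq A_k$, and the orbit counts $[\sg_k:A_k]=2$ are consistent across levels. Likewise the tower $\sg_2,\ \sg_3,\ A_4,\ A_5,\dots$ is downward consistent. Ruling these out is precisely where the content of Cameron's theorem lies, and it requires geometric arguments about the invariant \emph{relations}, not just the abstract subgroups: for instance, $H_2=1$, $H_3=C_3$ would give a $G$-invariant tournament in which every $3$-subset is cyclic, which is impossible on $4$ or more points (some vertex has out-degree at least $2$, and the triple it forms with two of its out-neighbours is transitive); the $A_4$-type tower corresponds to an invariant parity/orientation relation on $4$-subsets, which must be excluded by a separate argument using $5$ points. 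Without these exclusions your plan reduces the theorem to an unproved claim that happens to list the right answers; with them, it essentially \emph{is} Cameron's proof. The peripheral steps of your proposal (reduction to closed groups, the count $[\sg_k:H_k]$ of orbits on ordered tuples, the back-and-forth identification with $\QQ$ or $\QQ/\ZZ$) are correct and indeed routine.
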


The notion of completion refers here to the topology of simple
convergence, described in Section~2.4 of \cite{Cameron.1990}. Thanks
to the following lemma, it plays only minor role for our purposes.
Thus, for the sake of the simplicity of exposition in this extended
abstract, we will most of the time blur the distinction between a
permutation group and its completion.
\begin{lemma} \label{lemma.completion_same_profile_and_orbits}
A permutation group and its completion share the same profile and orbit algebra.
\end{lemma}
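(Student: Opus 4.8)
The plan is to reduce all three assertions to a single combinatorial fact: that $G$ and its completion $\overline{G}$ induce exactly the same orbits on the finite subsets of $E$. Indeed, the profile $\profile{G}$, the age $\age{G}$, and the orbit algebra $\orbitalgebra{G}$ are manufactured purely from these orbits — the profile counts them by size, the age is their disjoint union, and the orbit algebra is the formal span of $\age{G}$ endowed with the disjoint product, an operation that depends only on which finite subsets share an orbit. Hence, once the two groups determine the same orbit partition on finite subsets, the ages coincide verbatim, the profiles coincide (finite or infinite values alike), and the orbit algebras are literally equal, not merely isomorphic.

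To establish this coincidence, I would first recall what completion means for the topology of simple convergence: a permutation $h \in \sym(E)$ lies in $\overline{G}$ precisely when, for every finite subset $F \subseteq E$, some $g \in G$ agrees with $h$ on $F$, i.e.\ $\restrict{g}{F} = \restrict{h}{F}$. Since $G \subseteq \overline{G}$, every $G$-orbit of finite subsets is contained in an $\overline{G}$-orbit, so it suffices to prove the reverse inclusion.

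For that, I would take two finite subsets $A, B \subseteq E$ with $h(A) = B$ for some $h \in \overline{G}$, and apply the defining property of the completion to the finite set $F = A$: this yields $g \in G$ with $\restrict{g}{A} = \restrict{h}{A}$, whence $g(A) = h(A) = B$. So $A$ and $B$ already lie in a common $G$-orbit, and the two orbit decompositions agree.

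The only step that carries any content is this last one, and it is not so much an obstacle as the crux: everything hinges on the elementary remark that membership in the closure is tested on finite sets, while moving a finite subset $A$ only ever consults a permutation's values on $A$ — so agreement on $A$ alone transports $A$ correctly. The remaining points deserve just a word of care — that $\overline{G}$ is genuinely a permutation group (the closure of a subgroup of $\sym(E)$ in this topology is a closed subgroup) and that the disjoint product transports identically — both of which are immediate once the ages are identified.
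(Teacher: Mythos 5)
Your proof is correct. The paper itself gives no proof of this lemma (it is an extended abstract deferring full proofs to a longer version), but your argument is exactly the intended, standard one: under Cameron's definition of the completion via the topology of simple convergence, membership in $\overline{G}$ is tested by agreement with elements of $G$ on finite sets, so applying this to the finite set $A$ itself shows every $\overline{G}$-orbit of finite subsets is already a $G$-orbit; since the profile, age, and orbit algebra are built solely from these orbits, all three invariants coincide. Your closing remarks (that the closure is indeed a subgroup of $\sym(E)$, and that equality of orbits gives literal equality of the orbit algebras inside the ambient algebra of formal sums of finite subsets) handle the only points where care is needed.
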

In particular, we may now say that
Theorem~\ref{profile.one.classification} essentially classifies
primitive $P$-oligomorphic groups.

\subsection{Operations and examples}

\begin{lemma}[Operations on groups, their ages and orbit algebras]\ 
  \label{lemma.operations}
  \begin{compactenum}
  \item Let $G$ be a permutation group acting on $E$, and $F$ be a
    stable subset of $E$. Then, $\orbitalgebra{\restrict{G}{F}}$ is both a
    subalgebra and a quotient of $\orbitalgebra{G}$.
  \item Let $G$ be a permutation group acting on $E$ and $H$ be a
    subgroup. Then, $\orbitalgebra{G}$ is a subalgebra of
    $\orbitalgebra{H}$.
  \item \label{lemma.operations.direct_product}
    Let $G$ and $H$ be permutation groups acting on $E$ and $F$
    respectively. Take $G\times H$ endowed with its natural action on the disjoint union $E \sqcup F$.
    Then, $\age{G\times H}\simeq \age{G}\times\age{H}$,
    and
    $\orbitalgebra{G\times H} \simeq \orbitalgebra{G} \otimes
    \orbitalgebra{H}$; it follows that
    $\profileseries{G\times H}=\profileseries{G}\profileseries{H}$.
  \item Let $G$ and $H$ be permutation groups acting on $E$ and $F$
    respectively. Intuitively, the \textbf{wreath product} $G\wreath H$ acts on
    $|F|$ copies $(E_f)_{f\in F}$ of $E$, by permuting each
    copy of $E$ independently according to $G$ and permuting the
    copies according to $H$. By construction, the partition
    $(E_f)_{f\in F}$ forms a block system, and $G\wreath H$ is not
    primitive (unless $G$ or $H$ is and $F$ or $E$, respectively, 
    is of size 1).
  \end{compactenum}
\end{lemma}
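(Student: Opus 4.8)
The common thread across all four parts is to compute inside the ambient space $V$ of formal finite $\KK$-linear combinations of finite subsets of the relevant ground set, equipped with the disjoint-union product: by construction each orbit algebra embeds into $V$ via the map sending an orbit to the sum of its elements, and its product is the restriction of the product on $V$. The whole lemma then reduces to tracking how the orbits of one group sit among the orbits of another. I would dispatch part (2) first. If $H\leq G$, each $G$-orbit of finite subsets decomposes as a disjoint union of $H$-orbits, so the sum-of-elements image of a $G$-orbit is a sum of such images of $H$-orbits; since distinct $G$-orbits have disjoint supports, the induced linear map $\orbitalgebra{G}\to\orbitalgebra{H}$ is injective, and it is an algebra map because both products are restrictions of the one on $V$. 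This exhibits $\orbitalgebra{G}$ as a subalgebra of $\orbitalgebra{H}$.

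For part (1), the \emph{stability} of $F$ does all the work: if $A\subseteq F$ and $g\in G$ then $gA\subseteq gF=F$, so any $G$-orbit either lies entirely among the finite subsets of $F$ or avoids them altogether, and on subsets of $F$ the actions of $G$ and $\restrict{G}{F}$ agree. The subalgebra inclusion $\orbitalgebra{\restrict{G}{F}}\hookrightarrow\orbitalgebra{G}$ is the identification of an orbit of subsets of $F$ with the corresponding $G$-orbit, and it respects products since disjointness and unions of subsets of $F$ are computed identically in $E$. For the quotient, I would use the linear map $\orbitalgebra{G}\to\orbitalgebra{\restrict{G}{F}}$ that fixes each orbit whose representatives lie in $F$ and sends every other orbit to $0$; this is well defined by stability, and the span of the orbits not contained in $F$ is an ideal, because any set meeting $E\setminus F$ stays outside $F$ after union with anything. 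The quotient is visibly $\orbitalgebra{\restrict{G}{F}}$.

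Part (3) follows from the fact that a finite subset of $E\sqcup F$ splits uniquely as $S_E\sqcup S_F$, and that its $(G\times H)$-orbit is the pair formed by the $G$-orbit of $S_E$ and the $H$-orbit of $S_F$; this is the graded bijection $\age{G\times H}\simeq\age{G}\times\age{H}$, and on sum-of-elements representatives it becomes the identification $\orbitalgebra{G\times H}\simeq\orbitalgebra{G}\otimes\orbitalgebra{H}$. Because every subset of $E$ is automatically disjoint from every subset of $F$, the disjoint-union product factors as the $G$-factor product on the $E$-parts and the $H$-factor product on the $F$-parts, matching the tensor product structure; the Hilbert series identity $\profileseries{G\times H}=\profileseries{G}\profileseries{H}$ is then immediate.

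Finally, part (4) is essentially unpacking the definition: each $g\in G\wreath H$ sends the copy $E_f$ onto $E_{h(f)}$, where $h\in H$ is the image of $g$ under the projection $G\wreath H\to H$, so $(E_f)_{f\in F}$ is a $G\wreath H$-stable partition and hence a block system. It is non-trivial --- thereby witnessing imprimitivity --- unless it consists of a single block, i.e.\ $|F|=1$, or all its blocks are singletons, i.e.\ $|E|=1$, which are precisely the stated exceptions. The only delicate points in the whole argument are the well-definedness checks already flagged when the orbit algebra was introduced, namely that the sum-of-elements embedding into $V$ is injective and that the disjoint-union product descends to the orbit algebra; I take these as given, after which each assertion is routine bookkeeping and I anticipate no substantial obstacle.
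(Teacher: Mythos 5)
The paper itself contains no proof of this lemma (all full proofs are deferred to the long version), so there is no argument to compare yours against line by line; your strategy --- embed every orbit algebra into the ambient space $V$ of formal linear combinations of finite subsets via orbit sums, then reduce each claim to bookkeeping on how orbits of one group refine or pair with orbits of another --- is the natural one, and your treatments of (1), (3) and (4) are correct. Stability of $F$ does make $\age{\restrict{G}{F}}$ a subset of $\age{G}$ that is closed under the product, and the span of the remaining orbits is an ideal for exactly the reason you give; the unique splitting $S = S_E \sqcup S_F$ yields the graded bijection in (3), and since subsets of $E$ are automatically disjoint from subsets of $F$, the disjoint-union product factors as claimed; and in (4) the copies $E_f$ are visibly permuted among themselves, with the degenerate cases $|F|=1$ and $|E|=1$ accounting precisely for the stated exceptions.

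One point needs repair in (2). You assert that the image of a $G$-orbit sum is a sum of $H$-orbit sums and that this induces a map $\orbitalgebra{G} \to \orbitalgebra{H}$; but a single $G$-orbit can split into \emph{infinitely} many $H$-orbits (take $G = \sg_\infty$ and $H$ the trivial subgroup: the orbit of singletons splits into infinitely many), and then the image of the $G$-orbit sum is not a finite linear combination of $H$-orbit sums, hence does not lie in $\orbitalgebra{H}$ as the paper defines it (formal \emph{finite} combinations of orbits). Your argument, and indeed the statement itself, is sound exactly when each graded component of $\age{H}$ is finite, i.e. when $H$ is oligomorphic: then each $G$-orbit of $n$-subsets meets only finitely many of the finitely many $H$-orbits of $n$-subsets, so the sum is finite. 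This is the only case the paper ever uses (there $H$ is $P$-oligomorphic, hence so is the overgroup $G$), but you should either add this hypothesis explicitly or adopt the degree-wise invariant-functions model of the orbit algebra, in which infinite sums within a fixed degree are allowed. With that caveat, injectivity (distinct $G$-orbits have disjoint supports in $V$) and multiplicativity (both products are restrictions of the product of $V$) go through as you say.
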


\begin{examples}[Wreath products]\ 
  \label{example.wreath_products}
  \begin{compactenum}
  \item Let $G$ be the wreath product $\sg_\infty \wreath \sg_k$. The
    profile counts integer partitions with at most $k$ parts.
    The orbit algebra is the algebra of symmetric polynomials over $k$
    variables, that is the free commutative algebra with generators of
    degrees $1,\dots,k$. The generating series of the profile is given
    by $\profileseries{G} = \frac{1}{\prod_{d=1,\ldots,k} (1-z^d)}$.
  \item Let $G'$ be a finite permutation group. Then, the orbit
    algebra of $G=\mathfrak{S}_{\infty} \wreath G'$ is the
    \textbf{invariant ring} $\KK [X]^{G'}$ which consists of the
    polynomials in $\KK[X]=\KK[X_1,\ldots,X_k]$ that are invariant
    under the action of $G'$.
  \item Let $G'$ be a finite permutation group. Then, the orbit
    algebra of $G= G'\wreath \mathfrak{S}_{\infty}$ is the free
    commutative algebra generated by the $G'$-orbits of non trivial
    subsets. The generating series of the profile is given by
    $\profileseries{G} = \frac{1}{\prod_{d} (1-z^d)}$, where $d$ runs
    through the degrees of the $G'$-orbits of non trivial subsets,
    taken with multiplicity.
  \end{compactenum}
\end{examples}

\subsection{The canonical block system}
\label{subsection.block_dimension_and_canonical}

Let $G$ be a $P$-oligomorphic permutation group. In this section we
show how a block system provides a lower bound on the growth of the
profile. Seeking to optimize this lower bound, we establish the
existence of a canonical block system satisfying appropriate
properties. The later sections will show that this block system
minimizes synchronization and provides a tight lower bound.

\begin{lemma}
  \label{lemma.block_degrees}
  Let $G$ be a $P$-oligomorphic permutation group,
  endowed with a transitive block system $B$. Then,
  \begin{compactenum}
  \item Case 1: $B$ has finitely many infinite blocks, as in
    Example~\ref{example.wreath_products} (1) and (2). Then $G$ is a
    subgroup of $\sg_{\infty} \wreath \sg_k$ (where $k$ is the number of
    blocks), and $\orbitalgebra{G}$ contains $\sym_k$ which is a free
    algebra with generators of degrees $(1,\dots,k)$.
  \item Case 2: $B$ has infinitely many finite blocks, as in
    Example~\ref{example.wreath_products} (3). Then, $G$ is a subgroup
    of $\sg_k\wreath \sg_{\infty}$, and $\orbitalgebra{G}$ contains
    the free algebra with generators of degrees $(1,\dots,k)$.
  \end{compactenum}
\end{lemma}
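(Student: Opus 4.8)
The plan is to prove Lemma~\ref{lemma.block_degrees} by exhibiting, in each case, an explicit embedding of $G$ into a wreath product and then identifying a free subalgebra inside $\orbitalgebra{G}$ by means of the operations recorded in Lemma~\ref{lemma.operations}. The starting observation is that a transitive block system $B$ is, essentially by definition, the data needed to realize $G$ as a group acting on copies of a block while permuting the copies: if $\Gonblocks$ denotes the induced action of $G$ on the set of blocks, and $H$ denotes the restriction of the block-stabilizer to a single block, then $G$ is a subgroup of the wreath product $H \wreath \Gonblocks$, itself a subgroup of $\sg_{|\text{block}|} \wreath \sg_{(\text{number of blocks})}$. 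This is the generic ``imprimitive action'' embedding; I would state it once and specialize to the two cases.

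In Case 1 the blocks are infinite and there are $k$ of them, so the embedding gives $G \le \sg_\infty \wreath \sg_k$. By Lemma~\ref{lemma.operations}(2), the inclusion $G \le \sg_\infty \wreath \sg_k$ induces an inclusion $\orbitalgebra{\sg_\infty \wreath \sg_k} \hookrightarrow \orbitalgebra{G}$ of orbit algebras (the larger group has the smaller, coarser age, hence the smaller algebra sitting inside). By Example~\ref{example.wreath_products}(1), $\orbitalgebra{\sg_\infty \wreath \sg_k}$ is exactly the ring $\sym_k$ of symmetric polynomials in $k$ variables, which is free (polynomial) on generators of degrees $1,\dots,k$. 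Composing, $\orbitalgebra{G}$ contains this free algebra with generators of the prescribed degrees, which is the claim. Case 2 is entirely parallel: the blocks are finite of common size $k$ and there are infinitely many, so the embedding reads $G \le \sg_k \wreath \sg_\infty$, and the same Lemma~\ref{lemma.operations}(2) argument plants $\orbitalgebra{\sg_k \wreath \sg_\infty}$ inside $\orbitalgebra{G}$. Here the relevant free algebra, with generators of degrees $1,\dots,k$, comes from Example~\ref{example.wreath_products}(3) applied to $G' = \sg_k$: the $\sg_k$-orbits of nontrivial subsets of a $k$-set have exactly the degrees $1,\dots,k$ (one orbit per cardinality, since $\sg_k$ is $k$-transitive on its points), and they freely generate the orbit algebra.

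The two facts I would lean on most heavily, and hence the points that need the most care, are the direction of the containment in Lemma~\ref{lemma.operations}(2) and the freeness of the relevant subalgebras. For the former, the subtlety is that passing from $G$ to the \emph{larger} group $\sg_\infty \wreath \sg_k$ (resp.\ $\sg_k \wreath \sg_\infty$) coarsens the orbit partition, so an orbit of the big group is a union of $G$-orbits and maps to a sum of basis elements of $\orbitalgebra{G}$; one must check that this assignment respects the disjoint product and is injective, i.e.\ genuinely realizes the big algebra as a subalgebra rather than merely a linear subspace. This is exactly what Lemma~\ref{lemma.operations}(2) asserts, so I would simply cite it, but it is the conceptual crux. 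The main obstacle is therefore not any single hard computation but making the imprimitive embedding $G \le H \wreath \Gonblocks$ precise enough — in particular handling the fact that $G$ need not act transitively within a block, and that the block-stabilizer restricted to a block could be a proper subgroup of $\sg_\infty$ (resp.\ $\sg_k$) — while ensuring the containments of free subalgebras survive. Since enlarging $H$ to the full symmetric group on a block only shrinks the inner orbit algebra, the free subalgebra it contributes is unaffected, so the bound is robust; I would make this monotonicity explicit as the final step before concluding.
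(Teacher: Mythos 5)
Your proposal is correct and follows essentially the same route as the paper, whose proof is precisely the one-line sketch ``use Lemma~\ref{lemma.operations} and Examples~\ref{example.wreath_products}'': embed $G$ in the appropriate wreath product via the imprimitive action, then invoke Lemma~\ref{lemma.operations}(2) to plant the wreath product's orbit algebra (identified by Example~\ref{example.wreath_products}(1) in Case~1 and Example~\ref{example.wreath_products}(3) with $G'=\sg_k$ in Case~2) inside $\orbitalgebra{G}$. Your fleshing out of the subgroup-containment direction and the degree count for the $\sg_k$-orbits is exactly the content the paper leaves implicit.
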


\begin{proof}[Sketch of proof]
  Use Lemma~\ref{lemma.operations} and
  Examples~\ref{example.wreath_products}.
\end{proof}

\begin{remark} \label{algebra_independant_parts}
  Let $G$ be an oligomorphic permutation group, and $E_1,\ldots,E_k$ be
  a partition of $E$ such that each $E_i$ is stable under $G$.
  In our use case, we have a block system $B$, and each $E_i$ is the
  support of one of the orbits of blocks in $B$.

  Then, $G$ is a subgroup of
  $\restrict{G}{E_1}\times\cdots\times\restrict{G}{E_k}$. Therefore,
  by Lemma~\ref{lemma.operations}, $\orbitalgebra{G}$ contains as
  subalgebra
  $\orbitalgebra{\restrict{G}{E_1}}\otimes\cdots\otimes\orbitalgebra{\restrict{G}{E_k}}$.
  In particular, the algebraic dimension of the age algebra is bounded
  below by the sum of the algebraic dimensions for each orbit of blocks.
\end{remark}

Combining Lemma~\ref{lemma.block_degrees} and
Remark~\ref{algebra_independant_parts}, each block system of $G$
provides a lower bound on the algebraic dimension of
$\orbitalgebra{G}$, and therefore on the growth rate of the profile.

This bound is not necessarily tight. Consider indeed the group
$\operatorname{Id}_2 \wreath \sg_\infty$. There are three block
systems: one with two infinite blocks, one with two orbits of blocks
of size $1$, one with one orbit of blocks of size $2$. With the above
considerations, all three block systems give a lower bound of $2$.
However, as we will see in Corollary~\ref{corollary.finite_blocks},
the lower bound for the latter block system can be refined to $3$
which is tight.

This rightfully suggests that better lower bounds are obtained when
maximizing the size of the finite blocks, and then maximizing the
number of infinite blocks.
\begin{theorem}
  \label{theorem.canonical_block_system}
  Let $G$ be a $P$-oligomorphic permutation group. There exists a
  unique block system, denoted $B(G)$ and called the \textbf{canonical
    block system of $G$}, that maximizes the size of its finite
  blocks, and then maximizes the number of infinite blocks.
\end{theorem}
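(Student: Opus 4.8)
The plan is to work entirely inside the lattice of block systems and to optimize the two criteria in turn, the polynomial profile hypothesis being used precisely to forbid the two pathologies that would break canonicity: a finite structure that ``synchronizes'' into an infinite block, and an infinite structure that shatters into finite blocks.

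First I would dispose of the finite blocks. By \ref{lemma.block_degrees}(2) together with \ref{algebra_independant_parts}, a transitive orbit consisting of infinitely many finite blocks of size $s$ forces $\orbitalgebra{G}$ to contain a free subalgebra with generators of degrees $1,\dots,s$, hence the profile to grow at least like $n^{s-1}$; since $G$ is $P$-oligomorphic, the sizes of finite blocks occurring in \emph{any} block system are therefore bounded. This lets me define $B_{\mathrm{fin}}$ to be the join (finest common coarsening) of all block systems whose blocks are all finite; the size bound guarantees that ascending chains of such systems stabilize, so the join is attained. The construction is made robust by the following observation: given any block system and the $G$-stable support $S$ of one of its orbits of finite blocks, the partition obtained by restricting to $S$ and adding singletons on $E\setminus S$ is again an all-finite block system. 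Since every all-finite system refines $B_{\mathrm{fin}}$, it follows that every finite block of every block system is contained in a block of $B_{\mathrm{fin}}$.

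The crux — and the step I expect to be the main obstacle — is to show that $B_{\mathrm{fin}}$ is itself all-finite, equivalently that the join of two all-finite block systems $B_1,B_2$ has no infinite block. The danger is genuine: a block of $B_1\vee B_2$ is a class of the transitive closure of ``lying in a common $B_1$- or $B_2$-block'', and a priori this closure can cascade to an infinite set even though each individual block is finite. I would rule this out using the profile: the support of an infinite join-block would carry a transitive $G$-action bearing two distinct nontrivial block structures, so taking a maximal block system on it yields an infinite primitive quotient, which by \ref{primitive.hightly.homogeneous} must be highly homogeneous (profile $1$); recombining this with the finite sub-blocks then exhibits a configuration whose profile dominates the partition function, contradicting the polynomial bound. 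Thus $B_{\mathrm{fin}}$ is all-finite, and by the previous paragraph its blocks are exactly the canonically defined maximal finite blocks; this settles the first optimization together with its uniqueness.

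It remains to organize the rest into infinite blocks. I would pass to the quotient group $\Gonblocks$ acting on the set of blocks of $B_{\mathrm{fin}}$. By maximality of $B_{\mathrm{fin}}$, a nontrivial finite block of $\Gonblocks$ would be a union of at least two maximal finite blocks forming a strictly larger finite block of $G$, which is impossible; hence every nontrivial block of $\Gonblocks$ is infinite. Now I treat each orbit of $\Gonblocks$ separately: if $\Gonblocks$ is primitive on it, \ref{primitive.hightly.homogeneous} and \ref{profile.one.classification} leave it as an orbit of maximal finite blocks of $G$; if it is imprimitive, I replace it by its \emph{finest} block system, whose blocks are all infinite, thereby maximizing the number of infinite blocks. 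Existence and uniqueness of this finest all-infinite refinement is the mirror image of the crux above: one shows, again via \ref{primitive.hightly.homogeneous}, that the relevant meet (common refinement) of all-infinite block systems cannot shatter into finite blocks, so a unique finest one exists. Recombining these choices across the orbits produces the block system $B(G)$; it realizes the maximal finite blocks and, among all block systems doing so, the maximal number of infinite blocks, and the uniqueness of $B(G)$ follows from the uniqueness obtained at each of the two stages.
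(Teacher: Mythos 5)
Your overall strategy is the same as the paper's (coarsest all-finite block system first, then infinite blocks), and your stage-one framing is sound \emph{modulo} what you call the crux: block sizes are indeed bounded via Lemma~\ref{lemma.block_degrees} and Remark~\ref{algebra_independant_parts}, and your observation that every finite block of every block system lies below $B_{\mathrm{fin}}$ is a nice way to pin down uniqueness. But your argument for the crux itself (the join of two all-finite systems is all-finite) does not go through. First, ``taking a maximal block system on it yields an infinite primitive quotient'' is a non sequitur: a coarsest nontrivial block system does give a primitive quotient (granting transitivity, which itself needs care here), but nothing forces that quotient to be \emph{infinite} --- it may consist of, say, two infinite blocks, and then Theorem~\ref{primitive.hightly.homogeneous} says nothing; conversely, a system maximal among those with infinitely many blocks need not have a primitive quotient. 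Second, and more seriously, the closing step ``recombining this with the finite sub-blocks exhibits a configuration whose profile dominates the partition function'' is not an argument, and the configuration it invokes --- finite blocks whose quotient action has profile $1$ --- is perfectly compatible with a polynomial profile: $\sg_2 \wreath \sg_\infty$ is exactly such a configuration and its profile is $\lfloor n/2\rfloor+1$. Any valid contradiction must use the two systems $B_1$ and $B_2$ \emph{simultaneously} (their overlap is what is pathological); your final step uses only one. A correct elementary route, needing only oligomorphy: inside an infinite join-block, the graph joining two points when they share a $B_1$-block or a $B_2$-block is connected, infinite and locally finite, so the $G$-invariant graph distance is unbounded, giving infinitely many orbits of $2$-subsets --- contradiction. (Also, bounded block sizes do not make ascending chains literally stabilize, since a chain can keep changing at disjoint places; what boundedness gives is that directed joins of all-finite systems remain all-finite, which is what you actually need once pairwise join-closure is known.)

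Your stage two is not just organized differently from the paper's: it constructs the wrong block system. The optimization in the theorem is lexicographic --- sizes of finite blocks first, then number of infinite blocks --- so the paper keeps every block of the coarsest all-finite system and groups only its \emph{singleton} blocks into infinite blocks (with the kernel, which you never address, set aside as a block of its own). Your rule instead dissolves \emph{any} orbit of finite blocks on which $\Gonblocks$ is imprimitive. Take $G=(\operatorname{Id}_2\wreath\sg_\infty)\wreath\sg_2$: here $B_{\mathrm{fin}}$ is the single orbit of blocks of size $2$, the quotient $\Gonblocks\simeq\sg_\infty\wreath\sg_2$ is imprimitive, and your construction replaces the pairs by four infinite blocks; the canonical system is the system of pairs itself, and the difference matters downstream, since the pairs yield algebraic dimension $6$ (the index-two subgroup is a product of two copies of $\operatorname{Id}_2\wreath\sg_\infty$, each free on $3$ generators, and Lemma~\ref{lemma.subgroup.rate} transfers the growth rate to $G$), whereas four infinite blocks give only the lower bound $4$ of Lemma~\ref{lemma.block_degrees}. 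In addition, your appeal to a ``mirror image'' of the crux is unjustified: intersections of infinite blocks can be finite or empty, so meet-closure of all-infinite systems is a genuinely different, unproved claim; and since blocks may straddle several orbits of elements (as happens for diagonal-type actions), a per-orbit choice followed by recombination does not obviously produce either existence or uniqueness of the finest all-infinite coarsening. This second stage needs to be redone along the paper's lines: keep all non-singleton finite blocks, and group the remaining singletons into a canonical maximal family of infinite blocks.
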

\begin{proof}[Sketch of proof]
  In the lattice of block systems of $G$, consider the lower set $I$
  of the block systems whose blocks are all finite. The join of two
  such block systems turns out to be again in $I$. Furthermore, if $B$
  is coarser than $B'$, then the aforementioned lower bound on the
  algebraic dimension increases strictly; since the growth of the
  profile is polynomial, there can
  be no infinite increasing chain. It follows that $I$ is in fact a
  lattice with a maximal (coarsest) block system. The kernel of $G$,
  if non empty, is one of the blocks of this block system.

  Consider now the collection of the remaining blocks of size $1$ of
  $B$. Following similar arguments as above,
  they can be replaced by a canonical maximal collection of infinite
  blocks to produce $B(G)$.
\end{proof}

\subsection{Subdirect products}
\label{subsection.subdirect_and_synchro}

The actions of a permutation group on two of its orbits need not be
independent; intuitively, there may be partial or full
synchronization, which influences the profile and the orbit algebra. A
classical tool to handle this phenomenon is that of subdirect products.

 \begin{definition}
 Let $G_1$ and $G_2$ be groups. A \textbf{subdirect product} of $G_1$ and 
 $G_2$ is a subgroup of $G_1 \times G_2$ which projects onto each 
factor under the canonical projections.
 \end{definition}

For instance, suppose $G$ is a permutation
group that has two orbits of elements $E_1$ and $E_2$. 
If $G_i$ is the group induced on $E_i$ by $G$, $G$ is a subdirect product
of $G_1$ and $G_2$.

Denote $N_1 = \fix_G (E_2)$ and $N_2 = \fix_G (E_2)$. Then $N_1$ and $N_2$ are normal
subgroups of $G$; and $N_1 \cap N_2 = \{1\}$, so $N_1$ and $N_2$ generate their direct product.
We have (after restriction of $N_i$ when needed): 
$\frac{G_1}{N_1} \simeq \frac{G}{N_1 \times N_2} \simeq \frac{G_2}{N_2}$,
this quotient representing the synchronized part of each group,
and $G = \{ (g_1, g_2) \suchthat g_1 N_1 = g_2 N_2 \}$.

With the classification of the groups of profile 1 in
mind (see Theorem~\ref{profile.one.classification}),
there are very few possible synchronizations
between two primitive infinite orbits of $G$: the possibilities
are indeed linked to the normal subgroups of the two restrictions $G_1$ and $G_2$.
Namely, the synchronization may be total, limited to the reflection in the cases
of $\mathrm{Rev}(\mathbb{Q})$ and $\mathrm{Rev}(\mathbb{Q}/\mathbb{Z})$
(synchronization \emph{of order 2})
or absent.

\begin{lemma}[Reduction 0]
  Synchronizations of order 2 between primitive orbits do not change
  the age of $G$ (this would be false for orbits of tuples).
\end{lemma}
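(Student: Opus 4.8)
The plan is to prove the lemma by showing that \emph{decoupling} the synchronization enlarges $G$ without changing its age. Let $E_1, E_2$ be the two primitive infinite orbits carrying the order-$2$ synchronization, with restrictions $G_i = \restrict{G}{E_i}$ each isomorphic to $\mathrm{Rev}(\QQ)$ or $\mathrm{Rev}(\QQ/\ZZ)$. By the classification of profile-$1$ groups, the only index-$2$ normal subgroup of such a $G_i$ is its order-preserving part $N_i$ (namely $\Aut(\QQ)$, resp. $\Aut(\QQ/\ZZ)$), so the synchronized quotient $G_i/N_i \simeq \ZZ/2$ is generated by the reflection. From the subdirect-product description $G = \{(g_1,g_2) \suchthat g_1 N_1 = g_2 N_2\}$, the group $G$ consists exactly of the pairs that reflect both coordinates or neither, and in particular $N_1 \times N_2 \le G$. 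I would then introduce $G^+ = \langle G, \sigma\rangle$, where $\sigma$ reflects $E_1$ and fixes everything else pointwise; this is precisely $G$ with the synchronization removed. The target is the equality $\age{G} = \age{G^+}$.

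Since $G \le G^+$, every $G$-orbit of finite subsets refines into a single $G^+$-orbit, so only the reverse inclusion needs work. As $[G^+ : G] = 2$ with $G^+ = G \sqcup G\sigma$, it suffices to check that for every finite subset $S = S_1 \sqcup S_2$ (with $S_i \subseteq E_i$, together with whatever part of $S$ lies outside $E_1 \sqcup E_2$, which $\sigma$ fixes), the subset $\sigma \cdot S$ lies in the $G$-orbit of $S$.

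This is where profile $1$ enters, and it is the crux of the argument. Because $N_1$ has profile $1$, it acts transitively on the $m$-subsets of $E_1$ for every $m$. Writing $m = |S_1|$, both $S_1$ and $\sigma(S_1)$ are $m$-subsets of $E_1$, so I can choose an order-preserving $a \in N_1$ with $a(S_1) = \sigma(S_1)$. The pair $(a, \mathrm{id})$ lies in $N_1 \times N_2 \le G$ and satisfies $(a, \mathrm{id}) \cdot S = (\sigma(S_1), S_2) = \sigma \cdot S$, which places $\sigma \cdot S$ in the $G$-orbit of $S$ and yields $\age{G} = \age{G^+}$.

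The main conceptual point, and the content of the parenthetical remark, is that this reduction uses the action on \emph{subsets} in an essential way: an order-preserving bijection of $E_1$ cannot reverse the linear or cyclic order of an $m$-tuple, so no element of $N_1$ can reproduce the effect of $\sigma$ on tuples, and the order-$2$ synchronization genuinely changes the orbits of tuples. The only bookkeeping obstacle is to ensure the reduction applies simultaneously to every pair of synchronized primitive orbits; I expect this to follow by iterating the pairwise argument, since each decoupling step only enlarges the group while preserving the age, so the reductions can be performed one pair at a time until no order-$2$ synchronization remains.
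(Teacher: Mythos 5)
The paper states Reduction~0 without proof (full proofs are deferred to the long version), so your attempt can only be judged on its own merits. Your core mechanism is the right one, and it is complete in the case $E = E_1 \sqcup E_2$: since the order-preserving part $N_1$ has profile $1$, it acts transitively on $m$-subsets of $E_1$, so the effect of $\sigma$ on any finite subset is reproduced by the order-preserving element $(a,\mathrm{id}) \in N_1 \times N_2 \le G$, and the generation argument then gives $\age{\langle G,\sigma\rangle} = \age{G}$. Your explanation of the parenthetical remark (an order-preserving map can match the reflection of a finite \emph{set} but never of a \emph{tuple}, since finite chains are self-dual only as unordered configurations) is also correct.

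The genuine gap is precisely the step you dismiss as bookkeeping: the lemma concerns two orbits of a group $G$ acting on a possibly larger set, and there your argument breaks in two ways. First, $(a,\mathrm{id})$ is an element of the restriction of $G$ to $E_1 \cup E_2$, not of $G$ itself; a preimage of it in $G$ may move the part $S_0$ of $S$ lying outside $E_1 \cup E_2$, so it need not send $S$ to $\sigma\cdot S$. Repairing this requires the order-preserving parts of the various orbits to be \emph{jointly} independent, so that some preimage also fixes $S_0$; this does not follow from the pairwise analysis (the synchronizing quotients are elementary abelian $2$-groups, where pairwise independence does not imply joint independence --- compare the even-weight subgroup of $(\ZZ/2)^3$ --- and parity-type constraints across three or more orbits are invisible to pairwise projections), so an extra argument is needed, e.g.\ via self-duality of finite chains to correct orientations on the remaining orbits, or via topological simplicity of $\Aut(\QQ)$. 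Second, and fatally for the iteration as stated, your $\sigma$ is the wrong desynchronizing element whenever $E_1$ is \emph{fully} synchronized with a third orbit $E_3$: for $S = \{x_1, x_3\}$ a pair matched by that synchronization, $\sigma\cdot S = \{\rho(x)_1, x_3\}$ is no longer a matched pair, hence not in $G\cdot S$, so $\age{\langle G,\sigma\rangle} \neq \age{G}$. One must either reflect an entire class of fully synchronized orbits simultaneously, or first invoke the canonical block system (which regroups fully synchronized orbits into an orbit of finite blocks) to exclude this configuration; without one of these, the claim ``each decoupling step preserves the age'' is false.
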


This lemma implies that the synchronizations of order 2 can harmlessly be
ignored in the study of the orbit algebra of a group; so we will
assume from now on that only full synchronizations may exist.

This is also true regarding the infinite orbits of finite blocks 
(instead of just elements),
with the last item of Lemma~\ref{lemma.block_degrees} in mind:
taking two such orbits, either the permutations of their blocks 
fully synchronize (blockwise) or they
do not at all. We derive the following remark.

\begin{remark}
\label{remark.independance_in_canonical_block_system}
By construction, the actions of $G$ on the orbits of blocks of the canonical
 block system $B(G)$ are independant blockwise (potentially ignoring harmless synchronizations of order 2).
\end{remark}

\section{Lifting from subgroups of finite index}
\label{section.subgroups_of_finite_index}

In this section, we study how the orbit algebra of an oligomorphic
permutation group $G$ relates to the orbit algebra of a normal

subgroup $\fisubgroup$ of $G$ of finite index, and derive two important reductions for
Macpherson's question.

\begin{theorem}
  \label{theorem.finite_index.finite_generation}
  Let $G$ be an oligomorphic permutation group and $\fisubgroup$ be a normal
  subgroup of finite index. If the orbit algebra $\orbitalgebra{\fisubgroup}$ of
  $\fisubgroup$ is finitely generated, then so is its subalgebra $\orbitalgebra{G}$.
  If in addition $\orbitalgebra{\fisubgroup}$ is Cohen-Macaulay, then
  $\orbitalgebra{G}$ is Cohen-Macaulay too.
\end{theorem}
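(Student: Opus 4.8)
The plan is to recast the problem in the language of classical invariant theory by exhibiting $\orbitalgebra{G}$ as the ring of invariants of a finite group acting on $\orbitalgebra{\fisubgroup}$. Write $Q = G/\fisubgroup$, which is finite since $\fisubgroup$ has finite index. First I would observe that, because $\fisubgroup$ is normal in $G$, each $g \in G$ sends the $\fisubgroup$-orbit $\fisubgroup\cdot A$ of a finite subset $A$ to the $\fisubgroup$-orbit $\fisubgroup \cdot (gA)$; indeed $g\fisubgroup A = (g\fisubgroup g^{-1})gA = \fisubgroup gA$. Thus $G$ permutes the set $\age{\fisubgroup}$ of $\fisubgroup$-orbits, and since $\fisubgroup$ itself acts trivially on $\age{\fisubgroup}$, this action factors through $Q$. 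The induced linear action of $Q$ on $\orbitalgebra{\fisubgroup}$ preserves the grading by cardinality and the disjoint product (a bijection of $E$ maps disjoint subsets to disjoint subsets), so $Q$ acts by graded algebra automorphisms.

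Next I would identify the $G$-orbits of finite subsets with the $Q$-orbits on $\age{\fisubgroup}$. Under the embedding $\orbitalgebra{G} \hookrightarrow \orbitalgebra{\fisubgroup}$ of Lemma~\ref{lemma.operations}, a $G$-orbit is sent to the sum of the $\fisubgroup$-orbits it contains, that is, to the corresponding $Q$-orbit sum in $\orbitalgebra{\fisubgroup}$. As these orbit sums form a $\KK$-basis of the invariant subalgebra $\orbitalgebra{\fisubgroup}^{Q}$, we obtain a graded algebra isomorphism
\begin{displaymath}
  \orbitalgebra{G} \;\simeq\; \orbitalgebra{\fisubgroup}^{Q}.
\end{displaymath}
This reduction is the heart of the argument; once it is in place, both conclusions follow from standard results on invariants of finite groups over a field of characteristic zero.

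For finite generation, $\orbitalgebra{\fisubgroup}$ is a finitely generated commutative $\KK$-algebra and $Q$ is finite, so Noether's finiteness theorem guarantees that $\orbitalgebra{\fisubgroup}^{Q}$ is again finitely generated; moreover $\orbitalgebra{\fisubgroup}$ is a finite module over $\orbitalgebra{\fisubgroup}^{Q}$, each homogeneous $a$ being integral over the invariants via $\prod_{q\in Q}(X - q\cdot a)$. For the Cohen-Macaulay statement I would invoke the theorem of Hochster and Eagon: since $\KK = \QQ$ has characteristic zero, $|Q|$ is invertible and the Reynolds operator $\reynolds{Q} = \frac{1}{|Q|}\sum_{q\in Q} q$ is an $\orbitalgebra{\fisubgroup}^{Q}$-linear projection of $\orbitalgebra{\fisubgroup}$ onto $\orbitalgebra{\fisubgroup}^{Q}$. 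Hence $\orbitalgebra{\fisubgroup}^{Q}$ is a direct summand of the Cohen-Macaulay ring $\orbitalgebra{\fisubgroup}$ as an $\orbitalgebra{\fisubgroup}^{Q}$-module, and Hochster-Eagon then yields that $\orbitalgebra{\fisubgroup}^{Q} \simeq \orbitalgebra{G}$ is Cohen-Macaulay.

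The main obstacle is not in the commutative-algebra black boxes, which apply verbatim, but in setting up the first two paragraphs cleanly: one must check that the $Q$-action on $\orbitalgebra{\fisubgroup}$ is well defined and compatible with the disjoint product, and that the embedding of Lemma~\ref{lemma.operations} has image exactly $\orbitalgebra{\fisubgroup}^{Q}$. The essential use of the field $\QQ$ is the existence of the Reynolds operator; in characteristic dividing $|Q|$ the Cohen-Macaulay conclusion can fail, which is why the hypothesis that $\fisubgroup$ has finite index, making $Q$ finite, combines so well with working over $\QQ$.
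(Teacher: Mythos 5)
Your proof is correct and takes essentially the same route as the paper: the paper's key lemma is that the relative Reynolds operator $\reynolds[\fisubgroup]{G}=\frac1{|G:\fisubgroup|}\sum_i g_i$ --- which is exactly averaging over your quotient $Q=G/\fisubgroup$ acting on $\orbitalgebra{\fisubgroup}$ --- is an $\orbitalgebra{G}$-linear projection of $\orbitalgebra{\fisubgroup}$ onto $\orbitalgebra{G}$, after which it replays Hilbert's finite-generation argument and the classical Cohen--Macaulay argument (Stanley). The only difference is packaging: you make the analogy literal by identifying $\orbitalgebra{G}$ with the invariant ring $\orbitalgebra{\fisubgroup}^{Q}$, so that Noether's finiteness theorem and Hochster--Eagon can be cited as black boxes rather than replayed.
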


This is a close variant of Hilbert's theorem stating that the ring of
invariants of a finite group is finitely generated; the orbit algebra
$\orbitalgebra{\fisubgroup}$ plays the role of the polynomial ring $\KK[X]$,
while the orbit algebra $\orbitalgebra{G}$ plays the role of the
invariant ring $\KK[X]^G$.
The key ingredient in Hilbert's proof is the \emph{Reynolds operator},
a finite averaging operator over the group. In the setting of
orbit algebras, $G$ is not finite; however, we will compensate by using
the \emph{relative Reynolds operator} with respect to $\fisubgroup$, which is a finite
averaging operator over the coset representatives. 
Then we  just
proceed as in Hilbert's proof. The same approach can be used to prove
that $\orbitalgebra{G}$ is Cohen-Macaulay as soon as
$\orbitalgebra{\fisubgroup}$ is.

\begin{corollary}[Reduction 1]
  \label{corollary.reduction.finite_block_system}
  Let $G$ be an oligomorphic group that admits a non trivial finite 
  transitive block system. Let $\fisubgroup$ be the subgroup of
  the elements of $G$ that stabilize each block. Assume that the orbit
  algebra of $\fisubgroup$ is finitely generated. Then so is the orbit algebra
  of $G$.
\end{corollary}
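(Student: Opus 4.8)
The plan is to derive Corollary~\ref{corollary.reduction.finite_block_system} directly from Theorem~\ref{theorem.finite_index.finite_generation}. First I would verify that the subgroup $\fisubgroup$ of elements stabilizing each block satisfies the two hypotheses of the theorem: that it is \emph{normal} in $G$, and that it is of \emph{finite index}. For normality, note that $\fisubgroup$ is the kernel of the action of $G$ on the set of blocks of the transitive block system; equivalently, $\fisubgroup = \bigcap_{B} \stab_G(B)$ where $B$ ranges over the blocks. A kernel of a group homomorphism (here, the homomorphism from $G$ into the permutation group of the blocks) is automatically normal, so this step is immediate.

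Next I would establish finite index. Because the block system is transitive and the profile of $G$ is bounded by a polynomial, $G$ is $P$-oligomorphic, so by Lemma~\ref{lemma.block_degrees} the number of blocks is finite (we are in Case~1, a transitive block system with finitely many infinite blocks, or more generally the block system is \emph{finite}, meaning it has finitely many blocks, as the statement explicitly hypothesizes). The image of $G$ under the action on the blocks is therefore a subgroup of the finite symmetric group on the blocks, hence finite; since $\fisubgroup$ is the kernel of this action, the first isomorphism theorem gives $[G : \fisubgroup] = |G/\fisubgroup| \leq |\sg_m| < \infty$, where $m$ is the number of blocks. Thus $\fisubgroup$ is a normal subgroup of finite index.

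With both hypotheses in hand, the conclusion follows by a direct application of Theorem~\ref{theorem.finite_index.finite_generation}: assuming $\orbitalgebra{\fisubgroup}$ is finitely generated, so is its subalgebra $\orbitalgebra{G}$. I expect the only genuinely load-bearing observation — and hence the main point to get right rather than the main obstacle — to be the identification of $\fisubgroup$ as the kernel of the action on blocks, which simultaneously delivers normality and, combined with finiteness of the number of blocks, finite index. The finiteness of the block set is where the hypothesis that the block system is both \emph{finite} (in the sense relevant here) and the $P$-oligomorphic assumption are used; everything else is a formal transfer through the previously proved theorem.
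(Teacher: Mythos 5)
Your proof is correct and matches the paper's intended argument: $\fisubgroup$ is the kernel of the homomorphism from $G$ to the permutation group of the (finitely many) blocks, hence normal and of finite index, and Theorem~\ref{theorem.finite_index.finite_generation} then applies directly. The only blemish is the unnecessary detour through $P$-oligomorphicity and Lemma~\ref{lemma.block_degrees} (the corollary assumes only oligomorphicity, and the hypothesis that the block system is finite already gives finite index, as you note yourself).
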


The second application is a reduction of Macpherson's question to
groups that admit no finite orbits of elements.
\begin{definition}
  The \textbf{kernel} of an oligomorphic permutation group $G$ is the
  union $\ker(G)$ of its finite orbits of elements.
\end{definition}
This terminology comes from the broader context of relational
structures: it can be shown that $\ker(G)$ is indeed the kernel of the
associated homogeneous relational structure. It is not to be confused
with the notion of kernel from group theory.
\begin{remark}
  The kernel $\ker(G)$ of an oligomorphic group is finite. Indeed, $G$
  has a finite number $\profile{G}(1)$ of orbits and thus of finite
  orbits; hence their union is finite as well.
\end{remark}

\begin{theorem}[Reduction 2]
  \label{theorem.kernel_reduction}
  Let $G$ be an oligomorphic permutation group with profile bounded by
  a polynomial. Assume that the orbit algebra of any group with the
  same profile growth and no finite orbit is finitely generated. Then,
  the orbit algebra of $G$ is finitely generated as well.
\end{theorem}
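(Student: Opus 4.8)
The plan is to strip off the finite kernel by passing to the subgroup that fixes it pointwise, which turns out to be normal of finite index, and then to invoke Theorem~\ref{theorem.finite_index.finite_generation}. Write $\ker(G)$ for the (finite) kernel and set $N := \fix_G(\ker(G))$, the subgroup of all $g\in G$ fixing every element of $\ker(G)$. Since $\ker(G)$ is a union of orbits it is $G$-stable, so $N$ is normal in $G$; and the induced action of $G$ on the finite set $\ker(G)$ identifies $G/N$ with a subgroup of $\sym(\ker(G))$, so $N$ has finite index in $G$. By Theorem~\ref{theorem.finite_index.finite_generation} it then suffices to show that $\orbitalgebra{N}$ is finitely generated.

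Next I would exploit that $N$ acts trivially on $\ker(G)$: as a permutation group of $E=\ker(G)\sqcup(E\setminus\ker(G))$ it coincides exactly with the direct product $\operatorname{Id}_{\ker(G)}\times G'$, where $G':=\restrict{N}{E\setminus\ker(G)}$ is the group induced on the complement of the kernel. The direct-product case of Lemma~\ref{lemma.operations} then gives $\orbitalgebra{N}\simeq\orbitalgebra{\operatorname{Id}_{\ker(G)}}\otimes\orbitalgebra{G'}$, and the first tensor factor is finite dimensional, being spanned by the (pairwise distinct) orbits of subsets of the finite set $\ker(G)$. Hence $\orbitalgebra{N}$ is finitely generated as soon as $\orbitalgebra{G'}$ is, and the problem reduces to applying the hypothesis to $G'$.

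It remains to check that $G'$ meets the hypothesis: it has no finite orbit, and it has the same profile growth as $G$. The main obstacle is the first point, where the normality of $N$ is essential. Inside any (infinite) $G$-orbit contained in $E\setminus\ker(G)$, the $N$-orbits are permuted transitively by $G$ and hence all have equal cardinality; were one of them finite, the whole $G$-orbit, a finite union of them, would be finite, contradicting that it avoids the kernel. Thus every orbit of $G'$ is infinite. For the growth, a finite-index subgroup shares the profile growth of the ambient group (each $G$-orbit of $n$-subsets splits into at most $[G:N]$ orbits under $N$), so $G$ and $N$ agree; meanwhile the tensor decomposition yields $\profileseries{N}=(1+z)^{|\ker(G)|}\,\profileseries{G'}$, a multiplication by a polynomial that preserves the growth rate, so $N$ and $G'$ agree too. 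With $G'$ having no finite orbit and the same profile growth as $G$, the hypothesis makes $\orbitalgebra{G'}$ finitely generated, and the reduction is complete. The same argument carries over to the Cohen--Macaulay strengthening, using the corresponding clause of Theorem~\ref{theorem.finite_index.finite_generation} together with the fact that tensoring by the finite-dimensional (hence trivially Cohen--Macaulay) factor $\orbitalgebra{\operatorname{Id}_{\ker(G)}}$ preserves the Cohen--Macaulay property.
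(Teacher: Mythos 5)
Your proposal is correct and follows essentially the same route as the paper: pass to the normal finite-index subgroup $\fix_G(\ker(G))$, identify it with the direct product $\operatorname{Id}_{\ker(G)}\times G'$ to invoke Lemma~\ref{lemma.operations}~\eqref{lemma.operations.direct_product}, apply the hypothesis to $G'$, and lift finite generation back to $G$ via Theorem~\ref{theorem.finite_index.finite_generation}. The two facts you verify inline --- that all $G'$-orbits are infinite (via normality and transitive permutation of the $N$-orbits inside each $G$-orbit) and that finite index preserves profile growth --- are exactly the paper's Lemmas~\ref{same_kernel} and~\ref{lemma.subgroup.rate}, re-derived rather than cited.
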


\begin{proof}[Sketch of proof]
  Let $\fisubgroup$ be the subgroup of $G$ fixing $\ker(G)$; it is
  normal and of finite index in $G$. Using the two upcoming simple
  lemmas together with
  Lemma~\ref{lemma.operations}~\eqref{lemma.operations.direct_product},
  the groups $\fisubgroup$ and $G$ share the same profile growth.
  Apply Theorem~\ref{theorem.finite_index.finite_generation}.
\end{proof}

\begin{lemma}
  \label{lemma.subgroup.rate}
  Let $G$ be a permutation group and $\fisubgroup$ be a normal subgroup of
  finite index. Then,
  \begin{displaymath}
    \profile{G}(n) \ \leq\ \profile{\fisubgroup}(n) \ \leq\ |G:\fisubgroup| \profile{G}(n)\,.
  \end{displaymath}
\end{lemma}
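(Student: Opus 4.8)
The plan is to compare the two sets of orbits directly, degree by degree. Fix $n$ and let $\Omega_n$ denote the set of $n$-subsets of $E$, on which both $G$ and $\fisubgroup$ act; then $\profile{G}(n)$ and $\profile{\fisubgroup}(n)$ count the $G$-orbits and the $\fisubgroup$-orbits of $\Omega_n$ respectively. The two inequalities will follow from understanding how the partition of $\Omega_n$ into $\fisubgroup$-orbits relates to its partition into $G$-orbits.

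For the left inequality, I would observe that since $\fisubgroup\leq G$, every $G$-orbit of $n$-subsets is a disjoint union of $\fisubgroup$-orbits. Hence the partition of $\Omega_n$ into $\fisubgroup$-orbits refines the partition into $G$-orbits, and a refinement has at least as many parts; this gives $\profile{G}(n)\leq\profile{\fisubgroup}(n)$ immediately, using no hypothesis beyond $\fisubgroup\leq G$.

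For the right inequality, the key point is to bound, for a single $G$-orbit $O\subseteq\Omega_n$, the number of $\fisubgroup$-orbits contained in $O$. Fix $x\in O$. Because $\fisubgroup$ is \emph{normal}, for any $g\in G$ one has $\fisubgroup\cdot(g\cdot x)=g\cdot(g^{-1}\fisubgroup g)\cdot x=g\cdot(\fisubgroup\cdot x)$; thus $g$ carries the $\fisubgroup$-orbit of $x$ to the $\fisubgroup$-orbit of $g\cdot x$, and more generally $G$ permutes the set of $\fisubgroup$-orbits contained in $O$. Since $\fisubgroup$ fixes each of its own orbits, this action factors through the finite quotient $G/\fisubgroup$, and it is transitive because $G$ acts transitively on $O$. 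Therefore the number of $\fisubgroup$-orbits inside $O$ equals the cardinality of a single $G/\fisubgroup$-orbit, which is at most $|G:\fisubgroup|$.

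Summing this bound over the $\profile{G}(n)$ distinct $G$-orbits in $\Omega_n$ then yields $\profile{\fisubgroup}(n)\leq|G:\fisubgroup|\,\profile{G}(n)$, completing the argument. The only delicate step is the right inequality, and its crux is the use of normality: once normality makes $G/\fisubgroup$ act transitively on the $\fisubgroup$-orbits within a fixed $G$-orbit, the bound by the index is automatic. Everything else is bookkeeping, and the inequalities hold verbatim even without assuming the profiles finite (though in our setting they are).
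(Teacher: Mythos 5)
Your proof is correct and takes essentially the paper's own approach: the extended abstract defers the proof of Lemma~\ref{lemma.subgroup.rate} to the long version, but the mechanism you use --- each $G$-orbit splits into $\fisubgroup$-orbits which $G/\fisubgroup$ permutes transitively, hence at most $|G:\fisubgroup|$ of them per $G$-orbit --- is exactly the argument the paper gives for Lemma~\ref{same_kernel}, applied there to orbits of elements instead of $n$-subsets. One harmless overstatement: normality is not really the crux of the right-hand inequality, since for an arbitrary finite-index subgroup the coset decomposition $G=\bigcup_i \fisubgroup g_i$ already gives $O = Gx = \bigcup_i \fisubgroup\,(g_i x)$, a union of at most $|G:\fisubgroup|$ orbits; what normality genuinely buys is the well-defined transitive $G/\fisubgroup$-action and the equality of the orbit sizes, which the paper exploits in Lemma~\ref{same_kernel} but which are not needed for the bound itself.
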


\begin{lemma} \label{same_kernel}
  Let $G$ be an oligomorphic permutation group and $\fisubgroup$ be a normal
  subgroup of finite index. Then $\ker(\fisubgroup)=\ker(G)$.
\end{lemma}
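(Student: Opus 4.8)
The plan is to compare the $\fisubgroup$-orbits and $G$-orbits on the elements of $E$ directly, and to prove that a $G$-orbit of elements is finite precisely when the $\fisubgroup$-orbits it contains are finite. Since both $\ker(\fisubgroup)$ and $\ker(G)$ are by definition the unions of the finite orbits of elements, this equivalence immediately yields the claimed equality $\ker(\fisubgroup)=\ker(G)$.

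First I would record the structural fact coming from normality. Fix $x\in E$ and let $O=G\cdot x$ be its $G$-orbit. Since $\fisubgroup\subseteq G$, the set $O$ is a disjoint union of $\fisubgroup$-orbits. For any $g\in G$, normality of $\fisubgroup$ gives $g(\fisubgroup\cdot x)=g\fisubgroup x=\fisubgroup g x=\fisubgroup\cdot(gx)$, so $G$ permutes the $\fisubgroup$-orbits contained in $O$, and does so transitively because $G$ is already transitive on $O$. Equivalently, the quotient $G/\fisubgroup$ acts transitively on the set of $\fisubgroup$-orbits inside $O$. Consequently the number $r$ of such $\fisubgroup$-orbits divides $|G:\fisubgroup|$, so $1\le r\le |G:\fisubgroup|<\infty$, and transitivity forces all these $\fisubgroup$-orbits to have the same cardinality, namely $|O|/r$.

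With this in hand the dichotomy is immediate. If $O$ is finite, then each of the $r$ orbits has size $|O|/r\le |O|<\infty$, so every element of a finite $G$-orbit lies in a finite $\fisubgroup$-orbit; this gives $\ker(G)\subseteq\ker(\fisubgroup)$. Conversely, if $O$ is infinite then, $r$ being finite, each $\fisubgroup$-orbit has size $|O|/r=\infty$; taking contrapositives shows that any element lying in a finite $\fisubgroup$-orbit must lie in a finite $G$-orbit, which gives $\ker(\fisubgroup)\subseteq\ker(G)$. Combining the two inclusions proves the lemma.

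The only genuinely delicate point is the structural fact of the second paragraph: one must use normality, not merely $\fisubgroup\le G$, to guarantee that $G$ acts \emph{transitively} on the $\fisubgroup$-orbits inside a single $G$-orbit, which is exactly what pins down both their common cardinality and their finite number. Without normality, the $\fisubgroup$-orbits inside one infinite $G$-orbit could have unequal sizes, and a finite $\fisubgroup$-orbit could coexist with infinite ones inside an infinite $G$-orbit, so the clean equivalence "finite $G$-orbit $\iff$ finite $\fisubgroup$-orbits" would break down. Everything else is a routine orbit-counting argument; oligomorphy plays no role here beyond making the notion of kernel meaningful.
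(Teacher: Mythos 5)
Your proof is correct and follows essentially the same route as the paper: both use normality to show that $G$ (in fact $G/\fisubgroup$) permutes the $\fisubgroup$-orbits inside a single $G$-orbit transitively, so there are finitely many of them, all of the same size, and the finite/infinite dichotomy for orbits follows. Your write-up merely spells out the details (the computation $g(\fisubgroup\cdot x)=\fisubgroup\cdot(gx)$ and the divisibility bound on the number of suborbits) that the paper leaves implicit.
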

\begin{proof}
  Let $O$ be a $G$-orbit of elements.  Since $\fisubgroup$ is a normal subgroup, $O$ splits
  into $\fisubgroup$-orbits on which $G$ -- and actually $G/\fisubgroup$ -- acts transitively by
  permutation; there are thus finitely many such $\fisubgroup$-orbits, all of the same
  size. In particular, infinite $G$-orbits split into infinite
  $\fisubgroup$-orbits, and similarly for finite ones.
\end{proof}

 In order to give an idea of the proof of 
Theorem~\ref{theorem.finite_index.finite_generation}, let 
us now turn to the \emph{relative Reynolds operator}
$\reynolds[\fisubgroup]{G}$. It is defined by choosing some representatives
$(g_i)_i$ of the cosets of $\fisubgroup$ in $G$:
\begin{displaymath}
  \reynolds[\fisubgroup]{G} := \frac1{|G:\fisubgroup|} \sum_i g_i\,.
\end{displaymath}

\begin{lemma}
  \label{lemma.relative_reynolds}
  Let $G$ be an oligomorphic permutation group, and $\fisubgroup$ be a normal
  subgroup of finite index. Then, the relative Reynolds operator
  $\reynolds[\fisubgroup]{G}$ defines a projection from $\orbitalgebra{\fisubgroup}$ onto
  $\orbitalgebra{G}$ which does not depend on the choice of the $g_i$'s,
  and is a $\orbitalgebra{G}$-module morphism.
\end{lemma}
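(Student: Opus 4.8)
The plan is to recognize $\reynolds[\fisubgroup]{G}$ as nothing more than the ordinary Reynolds operator of the \emph{finite} quotient group $G/\fisubgroup$ acting on $\orbitalgebra{\fisubgroup}$, and then to invoke the standard properties of such averaging operators. First I would set up this action carefully. Since $\fisubgroup$ is normal in $G$, conjugation shows that each $g\in G$ sends $\fisubgroup$-orbits of finite subsets to $\fisubgroup$-orbits, so $G$ acts on $\age{\fisubgroup}$ and hence linearly on $\orbitalgebra{\fisubgroup}$. Embedding $\orbitalgebra{\fisubgroup}$ into the space of formal linear combinations of finite subsets via orbit sums, one sees that this is precisely the $\fisubgroup$-invariant subspace and that $G$ preserves it: if $v$ is $\fisubgroup$-invariant and $k\in\fisubgroup$, then $k(gv)=g\bigl((g^{-1}kg)v\bigr)=gv$ by normality. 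Because the $G$-action on finite subsets preserves disjointness and unions, it acts by algebra automorphisms of $\orbitalgebra{\fisubgroup}$.

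The decisive observation is that $\fisubgroup$ fixes every $\fisubgroup$-orbit, so the action factors through the finite group $G/\fisubgroup$. In particular $g_i$ and $g_i k$ (with $k\in\fisubgroup$) act identically on $\orbitalgebra{\fisubgroup}$, which immediately gives independence from the choice of coset representatives and exhibits $\reynolds[\fisubgroup]{G}$ as the averaging projector $\frac1{|G:\fisubgroup|}\sum_{\bar g\in G/\fisubgroup}\bar g$ for a genuine finite group action. Standard finite-group averaging then yields at once that $\reynolds[\fisubgroup]{G}$ is idempotent and that its image is exactly the subspace of $G/\fisubgroup$-invariants of $\orbitalgebra{\fisubgroup}$.

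It remains to identify this invariant subspace with $\orbitalgebra{G}$. Written in the basis of $\fisubgroup$-orbits, an element of $\orbitalgebra{\fisubgroup}$ is $G$-invariant if and only if its coefficients are constant on each $G/\fisubgroup$-orbit of $\fisubgroup$-orbits; since these orbits are in bijection with the $G$-orbits of finite subsets, the invariants are spanned by the sums of the $\fisubgroup$-orbits contained in a common $G$-orbit. This is precisely the image of $\orbitalgebra{G}$ under its canonical embedding into $\orbitalgebra{\fisubgroup}$ (item (2) of Lemma~\ref{lemma.operations}), so $\reynolds[\fisubgroup]{G}$ is a projection from $\orbitalgebra{\fisubgroup}$ onto $\orbitalgebra{G}$. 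Finally, for the module property I would use that each $g_i$ is an algebra automorphism and that every $a\in\orbitalgebra{G}$ is $G$-invariant: then $g_i\cdot(a x)=(g_i\cdot a)(g_i\cdot x)=a\,(g_i\cdot x)$, and averaging over $i$ gives $\reynolds[\fisubgroup]{G}(a x)=a\,\reynolds[\fisubgroup]{G}(x)$.

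The only genuinely delicate point, already flagged when the disjoint-union product on the orbit algebra was defined, is verifying that the $G$-action is well defined on $\orbitalgebra{\fisubgroup}$ and compatible with that product. Once this compatibility is established, every remaining step is the textbook theory of Reynolds operators for finite groups, so I expect this well-definedness check to be the main (though modest) obstacle.
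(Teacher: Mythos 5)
Your proof is correct and is exactly the argument the paper intends: the paper (an extended abstract) states this lemma without proof, but its surrounding discussion describes the relative Reynolds operator precisely as a finite averaging operator over coset representatives used to replay Hilbert's argument, i.e., as the ordinary Reynolds operator of the finite group $G/\fisubgroup$ acting on $\orbitalgebra{\fisubgroup}$, which is what you establish. Your identification of the $G/\fisubgroup$-invariants with the embedded copy of $\orbitalgebra{G}$, and the averaging argument for the module property, are the standard steps and are carried out correctly.
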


\begin{proof}[Sketch of proof of Theorem~\ref{theorem.finite_index.finite_generation}]
  Use the relative Reynolds operator to replay Hilbert's proof of
  finite generation for invariants of finite groups, as well as the
  classical proof of the Cohen-Macaulay property (see
  e.g.~\cite{Stanley.1979}).
\end{proof}

\section{Case of a transitive block system with finite blocks}
\label{section.finite_blocks}
\label{hard_case}

In this section, permutation groups are assumed to be $P$-oligomorphic
and endowed with a non trivial
transitive block system with infinitely many finite blocks, which 
we choose maximal. 
We bring a positive answer to Macpherson's question in this setting.

\begin{lemma}
\label{lemma.action_on_finite_blocks}
Let $G$ be a $P$-oligomorphic permutation group having a non trivial
transitive block system with infinitely many maximal finite blocks;
then $G$ acts on the blocks as $\sg_\infty$.
\end{lemma}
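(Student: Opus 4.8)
The plan is to analyze the action induced by $G$ on the set $\mathcal{B}$ of blocks, to recognize its image $\overline G\le\sym(\mathcal{B})$ as one of the primitive profile-$1$ groups classified in Theorem~\ref{profile.one.classification}, and then to single out $\sg_\infty$. Write $k\ge 2$ for the common size of the (non trivial, finite) blocks. I would first record that $\overline G$ is again $P$-oligomorphic: because every $g\in G$ permutes the blocks, a family of $n$ distinct blocks and the subset of $E$ obtained as their union are carried along together, so the $\overline G$-orbits of $n$-subsets of $\mathcal{B}$ inject into the $G$-orbits of $(nk)$-subsets of $E$; hence $\profile{\overline G}(n)\le\profile{G}(nk)$ is bounded by a polynomial.

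Next I would argue that $\overline G$ is primitive. A non trivial block system of $\overline G$ partitions $\mathcal{B}$ into super-blocks, and pulling these back to $E$ produces a block system of $G$ strictly coarser than $B$. Super-blocks consisting of finitely many blocks would yield a strictly coarser system of $G$ whose blocks are still finite (and still infinitely many), contradicting the maximality of $B$; super-blocks consisting of infinitely many blocks are ruled out by the same finiteness-of-chains argument that underlies Theorem~\ref{theorem.canonical_block_system} (an infinite super-block structure would let one improve the lower bound on the growth indefinitely). Being primitive and of polynomial — in particular sub-partition — profile, $\overline G$ has profile $1$ by Theorem~\ref{primitive.hightly.homogeneous}; passing to its completion via Lemma~\ref{lemma.completion_same_profile_and_orbits}, Theorem~\ref{profile.one.classification} leaves only five candidates: $\Aut(\QQ)$, $\mathrm{Rev}(\QQ)$, $\Aut(\QQ/\ZZ)$, $\mathrm{Rev}(\QQ/\ZZ)$, and $\sg_\infty$.

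The heart of the argument, and the step I expect to be most delicate, is to eliminate the four order- and cyclic-order-preserving candidates; here the finiteness of the blocks is decisive. Since $k\ge 2$, a finite subset $S$ of $E$ meets each block of its support in one of at least two incomparable local types — say ``one element'' versus ``two elements''. If $\overline G$ preserved a dense linear (respectively cyclic) order on $\mathcal{B}$, then the $G$-orbit of $S$ would remember the whole linear (respectively cyclic) word of local types read along the ordered support, not merely their multiset; the number of such words of total weight $n$ grows like a Fibonacci number, i.e.\ exponentially, contradicting the polynomial bound on $\profile{G}$. Only $\sg_\infty$, for which the orbit of $S$ depends on the unordered multiset of local types alone, is compatible with $P$-oligomorphy, so $\overline G$ acts as $\sg_\infty$. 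Turning a preserved order on the blocks into exponential growth of $\profile{G}$ is thus the crux; checking primitivity, and in particular excluding infinite super-blocks, is the secondary point requiring care.
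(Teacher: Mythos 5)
Your overall route is the same as the paper's: show that the induced group $\overline G$ on the set $\mathcal{B}$ of blocks is primitive and $P$-oligomorphic, invoke Theorems~\ref{primitive.hightly.homogeneous} and~\ref{profile.one.classification} to reduce to the five profile-$1$ groups, and eliminate the four order- or cyclic-order-preserving candidates by a growth argument. Your last step is a correct and complete way of doing what the paper leaves as ``argue now that \dots'': distinct words of intersection cardinalities along a preserved (cyclic) order give distinct orbits, and there are exponentially many such words. The genuine gap is in the step you dismiss as secondary: the exclusion of super-blocks consisting of infinitely many blocks. The finiteness-of-chains argument behind Theorem~\ref{theorem.canonical_block_system} concerns chains of block systems \emph{with finite blocks}, along which the lower bound of Lemma~\ref{lemma.block_degrees} strictly increases; a coarsening of $B$ into \emph{finitely many infinite} super-blocks produces no chain at all, and the lower bound it yields (a copy of $\sym_k$, Case~1 of Lemma~\ref{lemma.block_degrees}) is finite and cannot be ``improved indefinitely''. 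Nothing about polynomial profile forbids this configuration. Concretely, take $G=(\sg_2\wreath\sg_\infty)\wreath\sg_2$: it is $P$-oligomorphic (profile growth $n^3$), its system $B$ of size-$2$ blocks is maximal among block systems with finite blocks (its only block systems are the singletons, $B$, the two copies, and $\{E\}$), and yet $G$ acts on the blocks of $B$ as $\sg_\infty\wreath\sg_2$ --- imprimitive, with two infinite super-blocks, and not $\sg_\infty$. So under the reading of maximality that your proof actually uses (no coarser system \emph{with finite blocks}, which is all your first case invokes), the statement itself is false, and no patch of your second case can succeed.

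What saves the lemma is the stronger reading, which is the one the paper's one-line proof silently relies on when it says ``using their maximality, the action of $G$ on the blocks is classified'': $B$ is maximal in the \emph{full} lattice of block systems, i.e.\ the only system strictly coarser than $B$ is $\{E\}$. Under that hypothesis primitivity of $\overline G$ is immediate and needs no case distinction: any non-trivial block system of $\overline G$ pulls back to a non-trivial block system of $G$ strictly coarser than $B$, whether its super-blocks are finite or infinite. If you replace your two-case discussion by this one-line correspondence, the remainder of your argument --- the bound $\profile{\overline G}(n)\le\profile{G}(nk)$ giving $P$-oligomorphy of $\overline G$, and the Fibonacci count eliminating $\Aut(\QQ)$, $\mathrm{Rev}(\QQ)$, $\Aut(\QQ/\mathbb{Z})$ and $\mathrm{Rev}(\QQ/\mathbb{Z})$ --- goes through and matches the paper's intended proof.
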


\begin{proof}[Sketch of proof]
Using their maximality, the action of $G$ on the blocks
is classified by Theorem~\ref{profile.one.classification};
argue now that with any of the five actions but $\sg_\infty$ the
profile would not be bounded by any polynomial.
\end{proof}

\begin{definition}
  Let $S_B = S_B^G = \stab_G(B)$ and, for $i\geq 0$,
  $H_i = H_i^G = \fix_{S_B}(B_1, \ldots, B_i)_{|B_{i+1}}$. The
  sequence $H_0 ~ H_1 ~ H_2 \cdots$ is called the \textbf{tower} of
  $G$ with respect to the block system $B$. The groups $H_i$ are
  considered up to a permutation group isomorphism.
\end{definition}

\begin{remark}
  \label{remark.tower_indep_of_order}
  By conjugation, using Lemma~\ref{lemma.action_on_finite_blocks}, 
  the tower does not depend on the ordering of the
  blocks. Furthermore, $H_{i+1}$ is a normal subgroup of $H_i$ for all $i\in \NN$.
  The above definition and this remark also apply
  to a permutation group of a finite set, if it acts on the
  blocks as the full symmetric group.
\end{remark}

\begin{example}
  Let $H$ be a finite permutation group. The tower of
  $H \wr \sg_{\infty}$ (resp. $H \times \sg_{\infty}$) for its natural
  block system is $H ~ H ~ H \cdots$ (resp. $H ~ Id ~ Id \cdots$).
\end{example}

\begin{lemma} \label{lemma.sympa} For all $k \in \mathbb{N}$,
  $\fix_G (B_1, \ldots, B_k)$ acts on the remaining blocks as $\sg_\infty$.
\end{lemma}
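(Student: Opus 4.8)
The plan is to analyze everything through the projection $\pi\colon G \to \Gonblocks$ onto the action of $G$ on the set $\mathcal{B}$ of blocks. By Lemma~\ref{lemma.action_on_finite_blocks} (reading ``acts as $\sg_\infty$'' up to completion, as permitted by Lemma~\ref{lemma.completion_same_profile_and_orbits}), this map is onto the full symmetric group $\sg_\infty$ of $\mathcal{B}$. The crucial structural feature is that the blocks are \emph{finite}: fixing $B_1,\ldots,B_k$ pointwise only kills a finite amount of freedom, whereas permuting the infinitely many remaining blocks is governed by $\sg_\infty$, which has no proper subgroup of finite index. I will turn this heuristic into a normal-subgroup argument.

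First I would set $S=\stab_G(B_1,\ldots,B_k)$, the \emph{setwise} stabilizer of the first $k$ blocks. Since $S=\pi^{-1}(\stab_{\sg_\infty}(B_1,\ldots,B_k))$ and $\pi$ is onto, the induced homomorphism $\psi\colon S\to \sg_\infty$ to the symmetric group of the \emph{remaining} blocks $\{B_{k+1},B_{k+2},\dots\}$ is surjective; this already settles the setwise version of the statement. Next I would observe that $N:=\fix_G(B_1,\ldots,B_k)$ is exactly the kernel of the restriction homomorphism $S\to \sg(B_1\cup\cdots\cup B_k)$, whose target is \emph{finite} because the blocks are finite. Hence $N$ is normal in $S$ with finite quotient $S/N$.

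It then remains to push $N$ through $\psi$. As $N\trianglelefteq S$ and $\psi$ is surjective, the image $\psi(N)$ is a normal subgroup of the symmetric group $\sg_\infty$ of the remaining blocks, and the quotient $\sg_\infty/\psi(N)$ is a quotient of the finite group $S/N$, hence finite. The proof concludes by invoking the Schreier--Ulam--Baer classification of the normal subgroups of the full infinite symmetric group: its only proper normal subgroups are the finitary alternating and finitary symmetric groups, both of infinite index. A normal subgroup of finite index must therefore be everything, so $\psi(N)=\sg_\infty$, that is, $\fix_G(B_1,\ldots,B_k)$ acts on the remaining blocks as $\sg_\infty$.

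The hard part is precisely this last step, and it is where the hypotheses are used sharply: the finiteness of the (maximal) blocks is what makes $S/N$ finite, and the fact that the block action is the \emph{full} symmetric group $\sg_\infty$ (rather than, say, the finitary one, whose quotient $\mathbb{Z}/2$ would obstruct the argument) is what forbids nontrivial finite quotients. I would also take care that ``acts as $\sg_\infty$'' is understood up to completion, so that $\pi$ is genuinely onto the full symmetric group and $\psi(N)$ is a bona fide finite-index normal subgroup rather than merely a dense one. An alternative packaging is an induction on $k$ whose inductive step is the case $k=1$ applied to $\fix_G(B_1,\ldots,B_{k-1})$, but the direct normal-subgroup argument above seems cleanest.
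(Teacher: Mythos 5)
Your proof is correct and follows the same skeleton as the paper's own (sketched) proof: both rest on the observation that, the blocks being finite, $\fix_G(B_1,\ldots,B_k)$ is a normal subgroup of finite index of the setwise stabilizer $\stab_G(B_1,\ldots,B_k)$, which by Lemma~\ref{lemma.action_on_finite_blocks} acts on the remaining blocks as $\sg_\infty$; hence the fixator acts there as a finite-index normal subgroup of $\sg_\infty$, which must be everything. The only divergence is how this last fact is justified. The paper concludes with ``use Theorem~\ref{profile.one.classification}'', staying within its own toolkit: among the complete profile-$1$ groups, $\sg_\infty$ is one of those admitting no proper subgroup of finite index, a fact it reuses later in the proof of the main theorem (where $\sg_\infty$, $\Aut(\QQ)$ and $\Aut(\QQ/\ZZ)$ are singled out for precisely this property). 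You instead invoke the Schreier--Ulam--Baer classification of the normal subgroups of the infinite symmetric group; this is an external but classical citation that yields the fact directly, and via the normal core it would even dispense with the normality you establish. Both routes are valid; the paper's keeps the treatment of $\sg_\infty$ uniform with that of the other primitive groups, while yours is more self-contained group theory. One nuance in your final caveat: passing to the completion of $G$ does not literally make the block action surjective onto $\sg_\infty$, since the image of a closed group under the projection need not be closed. The clean formulation of your last step is that the closure of $\psi(N)$ has finite index in the closure of $\psi(S)$, which is the full $\sg_\infty$, and by your Schreier--Ulam--Baer argument this closure is therefore all of $\sg_\infty$ --- that is, the conclusion holds in the same ``up to completion'' sense in which Lemma~\ref{lemma.action_on_finite_blocks} and the present lemma are to be read, which is all the paper needs.
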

\begin{proof}[Sketch of proof]
  As $\fix_G (B_1, \ldots, B_k)$ is a normal subgroup of finite index
  of $\stab_G(B_1,\ldots,B_k)$, it acts on the remaining blocks as a 
  subgroup of finite
  index of $\sg_{\infty}$. Use Theorem~\ref{profile.one.classification}.
\end{proof}

\begin{proposition}
  \label{proposition.tower_ages}
  Two groups with the same tower have isomorphic ages.
\end{proposition}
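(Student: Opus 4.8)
The plan is to show that the age of $G$ is entirely governed by the finite truncations of $G$ to finitely many blocks, and that each such truncation is, as a permutation group, reconstructible from the tower alone.

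First I would stratify $\age{G}$ according to the number of blocks a finite subset meets. Since $G$ acts on the blocks as $\sg_\infty$ (Lemma~\ref{lemma.action_on_finite_blocks}), every orbit of finite subsets has a representative $A$ meeting exactly the blocks $B_1,\ldots,B_m$ for some $m$, with $A\cap B_j\neq\emptyset$ for each $j$. Any $g\in G$ carrying one such representative to another must preserve the set of met blocks, hence lies in $\stab_G(\{B_1,\ldots,B_m\})$. Writing $U_m=B_1\sqcup\cdots\sqcup B_m$ and $P_m=\stab_G(\{B_1,\ldots,B_m\})_{|U_m}$ for the induced finite permutation group on $U_m$, this yields a degree-preserving bijection
\[
  \age{G}\;\cong\;\bigsqcup_{m\geq 0}\bigl\{\,P_m\text{-orbits of subsets of }U_m\text{ meeting every block}\,\bigr\}.
\]
It therefore suffices to prove that $P_m$ depends, up to permutation isomorphism, only on the tower, and that these isomorphisms can be chosen compatibly in $m$.

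Second, I would check that the tower of $P_m$ (defined as in Remark~\ref{remark.tower_indep_of_order}, which applies since $P_m$ acts on its $m$ blocks as $\sg_m$) is exactly $(H_0,\ldots,H_{m-1})$. The inclusion $\fix_{P_m}(B_1,\ldots,B_i)_{|B_{i+1}}\subseteq H_i$ is clear. For the reverse, given $g\in G$ fixing $B_1,\ldots,B_i$ pointwise and stabilizing $B_{i+1}$, I would correct it outside $U_m$: by Lemma~\ref{lemma.sympa} the group $\fix_G(B_1,\ldots,B_{i+1})$ acts on the remaining blocks as $\sg_\infty$, so some $\rho\in\fix_G(B_1,\ldots,B_{i+1})$ sends $g(\{B_{i+2},\ldots,B_m\})$ back to $\{B_{i+2},\ldots,B_m\}$; then $\rho g$ stabilizes $\{B_1,\ldots,B_m\}$ and has the same restriction to $B_{i+1}$ as $g$. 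Hence the tower of $P_m$ is $(H_0,\ldots,H_{m-1})$, and the statement reduces to a finite assertion: a finite group acting on $m$ finite blocks as $\sg_m$ is determined up to permutation isomorphism by its tower.

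Third, I would prove this finite reconstruction by induction on $m$. Let $N_m=\ker(P_m\to\sg_m)$ be the base group. Filtering $N_m$ by the subgroups fixing $B_1,\ldots,B_i$ pointwise and using the same correction argument (now with $\fix_{P_m}(B_1,\ldots,B_i)$ acting as $\sg_{m-i}$), the successive quotients are $H_0,H_1,\ldots,H_{m-1}$, so $|N_m|=\prod_{i=0}^{m-1}|H_i|$. To pin down $N_m$ inside $\prod_{j=1}^m H_0$, I would consider $Q=\stab_{P_m}(B_m)$: restriction to $U_{m-1}$ maps $Q$ onto $P_{m-1}$, while its kernel (elements trivial on $U_{m-1}$) restricts isomorphically to $H_{m-1}$ on $B_m$, giving a short exact sequence $1\to H_{m-1}\to Q\to P_{m-1}\to 1$. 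Finally $P_m=\langle Q,\tau\rangle$ for any $\tau\in P_m$ swapping $B_{m-1}$ and $B_m$, so $P_m$ is built from the inductively determined $P_{m-1}$, the group $H_{m-1}$, the extension $Q$, and the $\sg_m$-gluing. The main obstacle is precisely this last point: showing that the extension and the gluing are forced by the tower rather than depending on $G$, i.e.\ that the way the new block $B_m$ synchronizes with $B_1,\ldots,B_{m-1}$ is rigid. Here I would use the two structural facts about the tower, namely that $H_{i+1}$ is normal in $H_i$ (Remark~\ref{remark.tower_indep_of_order}) and that $P_m$ carries a full $\sg_m$ action permuting the blocks: the normality makes each pairwise projection of $N_m$ the diagonal subdirect product prescribed by the quotients $H_i/H_{i+1}$, and the $\sg_m$-symmetry upgrades this pairwise description to a global one independent of $G$. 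Once $P_m\cong P_m'$ is established permutation-isomorphically and coherently in $m$ (the isomorphisms restricting correctly under $U_{m-1}\subseteq U_m$), the stratified bijection above transports $\age{G}$ onto $\age{G'}$ while preserving degrees, which is the desired isomorphism of ages.
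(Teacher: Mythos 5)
Your first two steps are sound and are essentially the paper's own strategy: stratifying the age by the set of blocks met, and identifying the tower of the truncation $P_m$ with the first $m$ entries of the tower of $G$ via the correction argument from Lemma~\ref{lemma.sympa} (modulo the group/completion blur that the paper itself indulges in). The gap is in your third step, and it is fatal as stated: the ``finite assertion'' you reduce to --- \emph{a finite group acting on its $m$ blocks as $\sg_m$ is determined up to permutation isomorphism by its tower} --- is false, already for $m=2$. Take two blocks $B_1=B_2=\ZZ/5\ZZ$, let $s$ be the swap $(x,y)\mapsto (y,x)$, let $n$ add $1$ in both blocks, and let $n'$ add $1$ in $B_1$ and $-1$ in $B_2$. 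Set $C=\langle n,s\rangle$ and $D=\langle n',s\rangle$. Both act on the two blocks as $\sg_2$; in both cases the blockwise stabilizer is the cyclic group of order $5$ generated by the displayed element, so both towers are $(\ZZ/5\ZZ,\,1)$, and $H_1=1$ is indeed normal in $H_0$. Yet $sns^{-1}=n$ while $sn's^{-1}=n'^{-1}$, so $C\cong \ZZ/10\ZZ$ is abelian while $D$ is dihedral of order $10$: they are not even abstractly isomorphic. Worse for your purposes, their ages differ: orbits of pairs meeting both blocks are classified by $\pm(x-y)$ for $C$ (three orbits) but by $x+y$ for $D$ (five orbits). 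Since both groups satisfy the two rigidity inputs you propose to exploit (normality of $H_{i+1}$ in $H_i$ and the full $\sg_m$-symmetry), those inputs provably cannot force the reconstruction; the extension and gluing data you identify as ``the main obstacle'' is genuinely extra data that the tower does not record.

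This does not contradict the Proposition, because $C$ and $D$ cannot both occur as truncations of infinite (complete) groups having the same tower --- and the reason is precisely the ingredient your reduction discards, namely the ambient infinite group. In the infinite setting, Lemma~\ref{lemma.sympa} provides elements fixing $B_1\cup B_2$ pointwise while permuting the remaining blocks at will; combining a block-swapping element with conjugates of it supported on far-away blocks, any ``twist'' such as the one in $D$ can be decoupled from the swap and pushed into the blockwise stabilizer $S_B$, where it enlarges $H_0$ (to the dihedral group, in this example). So the tower of the \emph{infinite} group does detect the twist, but only thanks to the spare blocks; no statement purely about a finite truncation and its tower can. This is exactly how the paper's proof proceeds at the corresponding step: it does not attempt to reconstruct the truncations as permutation groups from the tower, but uses Lemma~\ref{lemma.sympa} (twice) to show directly that the two restrictions have the same \emph{age} up to conjugation within blocks, and that these ages exhaust the age of $G$ up to degree $k$. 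To repair your proof you would have to replace the finite assertion by a statement about truncations \emph{of infinite groups satisfying Lemma~\ref{lemma.sympa}}, at which point you are back to the paper's argument.
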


\begin{proof}[Sketch of proof]
  Start by restricting to the first $k$ blocks, and use
  Lemma~\ref{lemma.sympa} to show that, up to conjugation within each
  block, the restrictions of the two groups have the same age. Use
  again Lemma~\ref{lemma.sympa} to derive that the age of each group
  coincides, up to degree $k$, with that of its restriction. Conclude
  by taking the limit at $k=\infty$.
\end{proof}

\begin{lemma}
  \label{lemma.four_blocks}
  Let $G$ be a finite permutation group endowed with a block system
  composed of
  four blocks on which $G$ acts by $\sg_4$; denote its tower by
  $H_0 ~ H_1 ~ H_2 ~ H_3$. Then, $H_1 = H_2$.
\end{lemma}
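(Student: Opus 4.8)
The plan is to pass to the kernel $N$ of the action of $G$ on the four blocks, so that $G/N\cong\sg_4$ and $N$ embeds into $\sym(B_1)\times\cdots\times\sym(B_4)$ through the restriction maps $\pi_j\colon N\to\sym(B_j)$. For $S\subseteq\{1,2,3,4\}$ I write $N_S=\{k\in N\suchthat \pi_i(k)=\mathrm{id}\text{ for all }i\in S\}$, the subgroup of elements fixing $B_i$ pointwise for every $i\in S$. The first step is to rewrite the two tower terms as projections of these coordinate kernels. Using that (the finite analogue of) Lemma~\ref{lemma.sympa} lets $\fix_G(B_1\cup B_2)$ realize the full $\sg_2$ on $\{B_3,B_4\}$, one can cancel any residual block motion of a witness and obtain $H_1=\pi_2(N_{\{1\}})$; and since an element fixing $B_1,B_2$ pointwise while stabilizing $B_3$ is automatically blockwise trivial, one gets directly $H_2=\pi_3(N_{\{1,2\}})$. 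Finally, because $G$ induces the full $\sg_4$ on the blocks, conjugating by an element realizing a block permutation sending $\{B_1,B_2\}$ onto $\{B_3,B_4\}$ and $B_3$ onto $B_2$ gives a permutation isomorphism $H_2\cong\pi_2(N_{\{3,4\}})$.

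With this dictionary it suffices to compare the two subgroups $\pi_2(N_{\{1\}})$ and $\pi_2(N_{\{3,4\}})$ of $\sym(B_2)$, and the heart of the proof is a single conjugation trick. I would choose $w\in\fix_G(B_3\cup B_4)$ inducing the transposition $(B_1\,B_2)$ on the blocks; such a $w$ exists precisely because, once $B_3$ and $B_4$ are fixed, the two remaining blocks $B_1,B_2$ still support the full $\sg_2$ (again the finite analogue of Lemma~\ref{lemma.sympa}). Given $k\in N_{\{1\}}$ with $\pi_2(k)=h$, I claim $k\,(w k w^{-1})^{-1}$ lies in $N_{\{3,4\}}$ and still satisfies $\pi_2=h$. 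Indeed $w$ fixes $B_3$ and $B_4$ pointwise, so conjugation leaves the $B_3$- and $B_4$-components of $k$ untouched and they cancel in the product; meanwhile the $B_2$-component of $w k w^{-1}$ is a $w$-conjugate of the $B_1$-component of $k$, which is trivial, so the $B_2$-component of the product equals $h\cdot\mathrm{id}=h$. This shows $H_1=\pi_2(N_{\{1\}})\subseteq\pi_2(N_{\{3,4\}})$, whence $|H_1|\le|\pi_2(N_{\{3,4\}})|=|H_2|$. As $H_2$ is normal in $H_1$ (Remark~\ref{remark.tower_indep_of_order}) we also have $|H_2|\le|H_1|$, so the two orders coincide and $H_1=H_2$.

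The step I expect to be the main obstacle is exactly the control of this conjugation. In general, conjugating a blockwise element by a block permutation both permutes the $\sym(B_j)$-components and twists each of them by an inner automorphism coming from the local parts of the conjugator; such a twist would spoil both the cancellation on $B_3,B_4$ and the vanishing on $B_2$. The remedy is to force $w$ to act as the identity on $B_3\cup B_4$, so that no twist occurs on the coordinates that must cancel. This is the point where the hypothesis of \emph{four} blocks is essential: we need two blocks to fix pointwise (to kill the coordinates $3$ and $4$) together with two further blocks to transpose (to move the witness of $h$ off coordinate $1$). With only three blocks there is not enough room to fix two coordinates and still swap two others, and indeed one checks on small examples that the analogous equality $H_2=H_3$ can fail for four blocks -- consistently with the lemma asserting only $H_1=H_2$.
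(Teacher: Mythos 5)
You follow the same strategy as the paper's own proof: take a witness $k\in N_{\{1\}}$ of $h\in H_1$, conjugate it by a lift of the transposition $(B_1\,B_2)$, cancel, and conclude via the order-independence of the tower (Remark~\ref{remark.tower_indep_of_order}) and the containment of $H_2$ in $H_1$. One small remark: your dictionary $H_1=\pi_2(N_{\{1\}})$, $H_2=\pi_3(N_{\{1,2\}})$ needs no appeal to Lemma~\ref{lemma.sympa} at all; it is the definition of the tower, since $S_B$ is already the blockwise stabilizer. Much more importantly, you have put your finger on exactly the point that the paper's sketch passes over in silence: conjugating by an arbitrary lift $\sigma$ of $(B_1\,B_2)$ twists the $B_3$- and $B_4$-components, so that $\sigma x\sigma^{-1}=(g',1,\sigma_{|B_3}h\sigma_{|B_3}^{-1},\sigma_{|B_4}l\sigma_{|B_4}^{-1})$ rather than the $(g',1,h,l)$ asserted in the paper, and the cancellation $x^{-1}y=(g',g^{-1},1,1)$ breaks down. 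Your requirement that the conjugator $w$ fix $B_3\cup B_4$ pointwise is precisely what makes this step sound.

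The genuine gap is the existence of such a $w$. You justify it by a ``finite analogue of Lemma~\ref{lemma.sympa}'', but no such analogue exists: Lemma~\ref{lemma.sympa} is proved from Theorem~\ref{profile.one.classification}, essentially from the fact that $\sg_\infty$ admits no proper closed subgroup of finite index, and this argument has no finite counterpart. Concretely, let $V=\mathbb{F}_2^4/\langle(1,1,1,1)\rangle$, let $F_i$ be the image of $\langle e_i\rangle$ in $V$ (so $F_i\cap F_j=\{0\}$ for $i\neq j$), put $B_i=V/F_i$, and let $G$ be the group of permutations of $E=B_1\sqcup B_2\sqcup B_3\sqcup B_4$ generated by the translations ($v\in V$ sends $w+F_i$ to $w+v+F_i$) and the coordinate permutations ($\pi\in\sg_4$ sends $w+F_i$ to $\pi(w)+F_{\pi(i)}$); every element of $G$ has the form $v\pi$. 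The $B_i$ form a block system (indeed of maximal blocks) on which $G$ acts by $\sg_4$, with blockwise stabilizer $S_B=V$ and $\fix_V(B_i)=F_i$. Yet $\fix_G(B_3\cup B_4)$ is trivial: an element $v\pi$ of it has $\pi\in\{\mathrm{id},(1\,2)\}$, pointwise fixation of $B_3$ and $B_4$ evaluated at $w=0$ forces $v\in F_3\cap F_4=\{0\}$, and then $\pi(w)-w\in F_3\cap F_4=\{0\}$ for all $w$, whence $\pi=\mathrm{id}$ since $(1\,2)\bar e_1=\bar e_2\neq\bar e_1$. So your $w$ simply does not exist for this $G$, and nothing in the stated hypotheses can produce it.

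In fact nothing can, because the lemma as isolated in this extended abstract is false for this $G$: here $H_1=\pi_2(F_1)\cong\ZZ/2\ZZ$ (as $\bar e_1\notin F_2$), while $H_2=\pi_3(F_1\cap F_2)=\{1\}$. The same example breaks the paper's own proof at exactly the step you flagged, so the statement is missing a hypothesis. What rescues it in the only place it is used (Corollary~\ref{corollary.tower}) is that there the finite group is the restriction of an infinite $P$-oligomorphic group to four of its maximal finite blocks, and for such a restriction the genuine, infinite Lemma~\ref{lemma.sympa} supplies exactly your $w\in\fix_G(B_3\cup B_4)$ inducing $(B_1\,B_2)$. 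In summary: as a proof of the lemma as literally stated, your argument has an unfillable gap (the lemma itself being false); as a proof of the lemma with the needed hypothesis made explicit --- say, that the pointwise fixator of any two blocks still induces the full $\sg_2$ on the remaining two --- your argument is correct, and it is the right way to repair the paper's sketch.
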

\begin{proof}
An element $s$ of $S_B$ is determined by its action on each block,
which we write as a quadruple.
Let $g$ be an element of $H_1$. Then $S_B$ has an element $x$ that may be written $(1, g, h, l)$, with $h$ and $l$ also in $H_1$.
Let $\sigma$ be an element of $G$ that permutes the first two blocks and fixes the
other two (it exists by hypothesis).
By conjugating $x$ with $\sigma$ in $G$, we get an element $y$ in $S_B$
that we may write $(g', 1, h, l)$.
Then we have $x^{-1}y = (g', g^{-1}, 1, 1)$; hence,
using Remark~\ref{remark.tower_indep_of_order},
$g^{-1}$ and therefore $g$ are in $H_2$.
\end{proof}

\begin{corollary}
  \label{corollary.tower}
  Let $G$ be a $P$-oligomorphic permutation group having a non trivial
  transitive block system with infinitely many maximal finite blocks.
  Then, the tower of $G$ has the form $H_0~ H~ H~ H \cdots$, where
  $H_0$ is a finite permutation group and $H$ is a normal subgroup of
  $H_0$. 
\end{corollary}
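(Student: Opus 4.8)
The plan is to establish two easy structural facts and then to reduce the only real content --- the stabilization $H_1=H_2=H_3=\cdots$ --- to the finite statement of Lemma~\ref{lemma.four_blocks}. First I would record the two facts that require no work. Since $B_1$ is a finite block, $H_0=\restrict{S_B}{B_1}$ is a group of permutations of a finite set, hence finite; and Remark~\ref{remark.tower_indep_of_order} already tells us that $H_{i+1}$ is a normal subgroup of $H_i$ for every $i$. Consequently, once I prove that $H_i=H_{i+1}$ for all $i\geq 1$, I may set $H:=H_1$ and obtain the announced shape $H_0~H~H~H\cdots$ with $H$ a normal subgroup of $H_0$.

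I would treat the case $i=1$ first, by restricting $G$ to the finite set $F=B_1\cup B_2\cup B_3\cup B_4$. Because $G$ acts on the blocks as $\sg_\infty$ (Lemma~\ref{lemma.action_on_finite_blocks}), the finite group $\restrict{G}{F}$ acts on these four blocks as $\sg_4$, so Lemma~\ref{lemma.four_blocks} applies to it. It then remains to check that the tower of $\restrict{G}{F}$ equals the initial segment $H_0~H_1~H_2~H_3$ of the tower of $G$. The inclusion of each element of $S_B$ (which stabilizes every block, hence $F$, setwise) into $\restrict{G}{F}$ shows that $H_j$ is contained in the $j$-th term of the local tower. For the reverse inclusion I would take an element of $\restrict{G}{F}$ lying in its block stabilizer and fixing $B_1,\dots,B_j$ pointwise, lift it to some $g\in G$, and correct $g$ by an element $g'$ fixing $B_1,\dots,B_4$ pointwise that undoes the permutation $g$ induces on the blocks outside $F$; such a $g'$ exists because, by Lemma~\ref{lemma.sympa}, $\fix_G(B_1,\dots,B_4)$ still acts as $\sg_\infty$ on the remaining blocks. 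The product $g'g$ then lies in $S_B$, fixes $B_1,\dots,B_j$ pointwise, and (since $g'$ fixes $B_{j+1}$ pointwise) induces the same permutation on $B_{j+1}$ as $g$. This matching of induced permutation groups is the step I expect to be the main obstacle, as it is the only place where the passage from the infinite group to a finite restriction must be controlled. With the towers identified, Lemma~\ref{lemma.four_blocks} yields $H_1=H_2$.

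For an arbitrary $i\geq 1$ I would shift the whole picture by passing to the subgroup $G_i:=\fix_G(B_1,\dots,B_{i-1})$. By Lemma~\ref{lemma.sympa} this subgroup still acts as $\sg_\infty$ on the remaining blocks $B_i,B_{i+1},\dots$, and a direct unwinding of the definitions shows that its tower with respect to these blocks is the shifted sequence $H_{i-1}~H_i~H_{i+1}\cdots$ (its block stabilizer consists of the elements of $S_B$ fixing $B_1,\dots,B_{i-1}$ pointwise). Applying the previous paragraph to $G_i$ --- restricting it to $B_i\cup\cdots\cup B_{i+3}$ and invoking Lemma~\ref{lemma.four_blocks} --- gives the equality of the terms sitting in tower-positions $1$ and $2$ of $G_i$, that is $H_i=H_{i+1}$. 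Since $i\geq 1$ was arbitrary, all the groups $H_1,H_2,H_3,\dots$ coincide, and together with the two structural facts above this completes the proof.
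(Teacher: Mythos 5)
Your proof is correct and follows essentially the same route as the paper, whose (one-line) proof sketch is precisely to restrict to sequences of four consecutive blocks and apply Lemma~\ref{lemma.four_blocks}; your identification of the tower of the restriction with the corresponding segment of the tower of $G$ (lifting an element and correcting it by an element of $\fix_G(B_1,\dots,B_4)$ supplied by Lemma~\ref{lemma.sympa}), together with the shift to $\fix_G(B_1,\dots,B_{i-1})$ for general $i$, just fills in the details the paper omits. Note only that the existence of the correcting element $g'$ takes Lemma~\ref{lemma.sympa} at face value, i.e.\ it uses the paper's stated convention of blurring the distinction between a permutation group and its completion.
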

\begin{proof}[Sketch of proof]
  Restrict to sequences of four consecutive blocks and use
  Lemma~\ref{lemma.four_blocks}.
\end{proof}

\begin{corollary}
\label{corollary.finite_blocks}
  Let $G$ be a $P$-oligomorphic permutation group
  endowed with a non trivial transitive block system with
  infinitely many (maximal) finite blocks. Then, $G$ contains a finite
  index subgroup $\fisubgroup$ whose age coincides with that of
  $H\wreath \sg_\infty$ (where $H$ is as in 
  Corollary~\ref{corollary.tower}).\\
  It follows that its algebraic dimension is given by the number of $H$-orbits 
  (of non trivial subsets).
\end{corollary}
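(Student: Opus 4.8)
The plan is to reduce the statement to Proposition~\ref{proposition.tower_ages} by exhibiting a finite-index subgroup $\fisubgroup$ of $G$ whose tower equals $H ~ H ~ H \cdots$. Indeed, by Corollary~\ref{corollary.tower} the tower of $G$ is $H_0 ~ H ~ H \cdots$, while the Example after Remark~\ref{remark.tower_indep_of_order} computes the tower of $H\wreath\sg_\infty$ to be $H ~ H ~ H \cdots$; so once such a $\fisubgroup$ is found, Proposition~\ref{proposition.tower_ages} yields $\age{\fisubgroup}\simeq\age{H\wreath\sg_\infty}$. The only discrepancy to correct is in degree $0$, where the global action $H_0$ on a single block must be cut down to its normal subgroup $H$; since $H_0$ is finite and $[H_0:H]<\infty$, one expects this cut to cost only finite index.

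To build $\fisubgroup$ I would first analyse $S_B=\stab_G(B)$, the kernel of the action of $G$ on the blocks, fixing once and for all identifications of every block with a common finite set. Each $g\in S_B$ then restricts to $\restrict{g}{B_i}\in H_0$ on every block, and the key structural point is that all these restrictions lie in a single coset of $H$: comparing $g$ with a conjugate $\sigma g\sigma^{-1}$ by a block-transposition $\sigma\in G$, the element $g(\sigma g\sigma^{-1})^{-1}$ fixes all blocks but two and hence, by Remark~\ref{remark.tower_indep_of_order} together with $H_1=H$, acts within those two by elements of $H$; this is exactly the conjugation device of Lemma~\ref{lemma.four_blocks}, and it forces $\restrict{g}{B_i}H=\restrict{g}{B_j}H$. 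Consequently $\Phi\colon g\mapsto \restrict{g}{B_i}H$ is a well-defined surjective homomorphism $S_B\to H_0/H$, and $\fisubgroup_0:=\ker\Phi=\{g\in S_B : \restrict{g}{B_i}\in H \text{ for all } i\}$ is a normal subgroup of $G$, of index $[H_0:H]$ in $S_B$.

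Next I would let $\fisubgroup$ be the subgroup of all $\gamma\in G$ whose within-block actions (read through the identifications) all lie in $H$. This is a subgroup, it is normal in $G$ because $H$ is normal in $H_0$, and it meets $S_B$ in exactly $\fisubgroup_0$. Since any element of $S_B$ fixing a block pointwise already has trivial $\Phi$-image and so lies in $\fisubgroup_0$, one checks that $\fix_{\fisubgroup}(B_1,\dots,B_i)=\fix_{S_B}(B_1,\dots,B_i)$, whose restriction to the next block is $H$; hence the tower of $\fisubgroup$ is $H ~ H ~ H \cdots$ as required. The main obstacle is to prove that $\fisubgroup$ still acts on the blocks as $\sg_\infty$, equivalently that $[G:\fisubgroup]=[H_0:H]$ is finite: one must show that every block permutation is realised in $G$ by an element with within-block actions in $H$, so that a given realising element can be corrected by a factor in $S_B$ into $\fisubgroup$. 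This amounts to extending the coset-constancy above from $S_B$ to the block-permuting elements of $G$, and I expect it to follow once more from the conjugation trick of Lemma~\ref{lemma.four_blocks} applied to four consecutive blocks, the delicate input being the availability of sufficiently pure block-transpositions.

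Finally, the dimension statement is immediate. Through the age isomorphism, $\orbitalgebra{\fisubgroup}\simeq\orbitalgebra{H\wreath\sg_\infty}$, which by Example~\ref{example.wreath_products}(3) is the free commutative algebra on the $H$-orbits of non trivial subsets; its algebraic (Krull) dimension is therefore the number of such orbits. Since $\fisubgroup$ is a normal subgroup of finite index in $G$, Lemma~\ref{lemma.subgroup.rate} shows that $\profile{G}$ and $\profile{\fisubgroup}$ have the same growth, so $\orbitalgebra{G}$ has the same algebraic dimension, namely the number of $H$-orbits of non trivial subsets.
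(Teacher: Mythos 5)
Your overall route is the same as the paper's: reduce everything to producing a normal finite-index subgroup $\fisubgroup$ of $G$ whose tower is $H~H~H\cdots$, then invoke Proposition~\ref{proposition.tower_ages} together with the computation of the tower of $H\wreath\sg_\infty$, and finally Example~\ref{example.wreath_products}(3) and Lemma~\ref{lemma.subgroup.rate} for the dimension count. That scaffolding, and your last paragraph, match the paper exactly. The problem is that the one step which is not scaffolding --- the actual construction of $\fisubgroup$ --- is where your argument has genuine gaps, and you have in effect rediscovered the difficulty rather than resolved it.

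First, the coset-constancy argument fails as written. An element $\sigma\in G$ realizing the transposition of $B_i$ and $B_j$ also acts \emph{inside} every other block $B_k$, by some twist $\tau_k$ (which normalizes $\restrict{S_B}{B_k}$ but is otherwise uncontrolled). Hence $g(\sigma g\sigma^{-1})^{-1}$ acts on $B_k$ as the commutator $g_k\tau_k g_k^{-1}\tau_k^{-1}$, which is in general non-trivial: this element \emph{stabilizes} all blocks other than $B_i,B_j$ but does not fix them pointwise, so you cannot apply Remark~\ref{remark.tower_indep_of_order} and $H_1=H$ to conclude that its entries at $i,j$ lie in $H$. (A related, fixable, issue: the coset $\restrict{g}{B_i}H$ is not well defined until you replace $H$ by the intrinsic subgroup $\restrict{\fix_{S_B}(B_j)}{B_i}$ of $\restrict{S_B}{B_i}$, shown to be independent of $j$; chosen identifications conjugate it around.) The twist problem is exactly what makes ``pure'' block-transpositions necessary, and nothing guarantees their existence.

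Second, the step you defer --- that $\fisubgroup$ still acts on the blocks as $\sg_\infty$, equivalently that its index is finite --- is not a technical verification but the mathematical heart of the corollary: ``the availability of sufficiently pure block-transpositions'' is precisely what a general $P$-oligomorphic $G$ is not known to have, and asserting that the conjugation trick will supply them is circular, since that trick itself needs them (see above). What is actually required is an argument excluding synchronization between the action of $G$ on the set of blocks and the finite quotient $H_0/H$; the tools for this in the paper are the subdirect-product/synchronization analysis of Subsection~\ref{subsection.subdirect_and_synchro} and the fact that $\sg_\infty$ (as a closed group, via Theorem~\ref{profile.one.classification}, as used in Lemma~\ref{lemma.sympa}) admits no proper finite-index closed subgroups, hence no non-trivial finite quotients that could carry such a synchronization. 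No ingredient of this kind appears in your proposal, so the finite-index claim --- and with it the whole construction --- remains unproven.
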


\section{Proof of the main theorem}
\label{section.decoupage_recollage}

\begin{theorem}
  Let $G$ be a $P$-oligomorphic permutation group.
  Then, its orbit algebra $\orbitalgebra{G}$ is finitely generated and Cohen-Macaulay.
\end{theorem}
\begin{proof}[Sketch of proof]

  Consider the canonical block system $B(G)$ introduced in
  subsection~\ref{subsection.block_dimension_and_canonical}. Recall
  that it consists of finitely many infinite blocks, a finite
  number of infinite orbits of finite blocks, and possibly one finite
  stable block.

  We aim to prove the existence of a normal subgroup $\fisubgroup$ of
  finite index of $G$ with a simple form, ensuring that its orbit
  algebra is a finitely generated (almost free) algebra.

  Start with $\fisubgroup=G$.
  If there exists a finite stable block $B_k$ in $B(G)$, replace
  $\fisubgroup$ by the kernel of its action on $B_k$ (i.e. the kernel of
  the natural projection on this action). This ensures that
  $\fisubgroup$ fixes $B_k$.
  Replace further $\fisubgroup$ by the kernel of its action on the set
  of infinite blocks of $B(G)$. This ensures that $\fisubgroup$
  stabilizes each of the infinite blocks. Using that $\fisubgroup$ is of
  finite index and the construction of $B(G)$, 
  these blocks are now primitive orbits. Possibly
  replacing again $\fisubgroup$, we may further assume that (the
  completion of) $\fisubgroup$ acts on each of them as one of the three
  primitive groups of Theorem~\ref{profile.one.classification} that
  admit no subgroup of finite index.
  Now take an orbit of finite blocks. Using Corollary~\ref{corollary.finite_blocks}, 
  and replacing $\fisubgroup$ if needed, we may
  assume that the restriction of $\fisubgroup$ on the support of the orbit has
  the same age as some $H\wreath \sg_\infty$. Repeat for the other
  orbits of finite blocks.

  By construction of $B(G)$ and
  Subsection~\ref{subsection.subdirect_and_synchro}, argue that there
  is now no synchronization between $K$-orbits of blocks. 
  
  Then, $\fisubgroup$ has the same age as some direct product of
  groups of the form $\sg_\infty$, $\Aut(\QQ)$,
  $\Aut(\QQ / \mathbb{Z})$, and $G'\wreath \sg_\infty$ (and possibly a
  finite identity group). From Remark~\ref{algebra_independant_parts},
  $\orbitalgebra{\fisubgroup}$ is a finitely generated free algebra (possibly tensored
  with some finite dimensional diagonal algebra). Using
  Theorem~\ref{theorem.finite_index.finite_generation}, it follows
  that $\orbitalgebra{G}$ is finitely generated
  and Cohen-Macaulay over some free subalgebra $\QQ[\theta_i]$.

  From the groups involved in the direct product, one may construct
  explicitly the generators $\theta_i$, and therefore the degrees
  $(d_i)_i$ appearing in Corollary~\ref{corollary.hilbert_series} in
  the denominator of the Hilbert series.
\end{proof}

\printbibliography

\end{document}